\documentclass[11pt]{amsart}
\usepackage[utf8]{inputenc}
\usepackage[margin=1.25in]{geometry}
\usepackage{amsmath,amssymb,amsthm}
\usepackage{tikz}
\usepackage{tikz-cd}    % commutative diagrams with tikz
\usepackage{commath}    % formatting formulas
\usepackage{mathtools}    % formatting formulas
\usepackage{graphicx}
\usepackage{etoolbox}
\usepackage{enumitem}
\usepackage{comment}
\usepackage{stmaryrd}
\usepackage{hyperref}
\usepackage{subfiles}
\usepackage{mathrsfs}

\usepackage[
backend=biber,
style=alphabetic,
maxalphanames=10,
maxbibnames=10,
sorting=anyt,
]{biblatex}

\theoremstyle{plain}
\newtheorem{thm}[subsubsection]{Theorem}
\newtheorem{lem}[subsubsection]{Lemma}
\newtheorem{prop}[subsubsection]{Proposition}
\newtheorem{cor}[subsubsection]{Corollary}

\theoremstyle{definition}
\newtheorem{defn}[subsubsection]{Definition}

\theoremstyle{remark}
\newtheorem{rem}[subsubsection]{Remark}

\newcommand{\ZZ}{\mathbb{Z}}
\newcommand{\NN}{\mathbb{N}}
\newcommand{\QQ}{\mathbb{Q}}
\newcommand{\RR}{\mathbb{R}}
\newcommand{\CC}{\mathbb{C}}

\newcommand{\FF}{\mathbb{F}}

\newcommand{\Acal}{\mathcal{A}}

\newcommand{\Ecal}{\mathcal{E}}
\newcommand{\Fcal}{\mathcal{F}}

\newcommand{\Rcal}{\mathcal{R}}

\newcommand{\Ucal}{\mathcal{U}}
\newcommand{\Vcal}{\mathcal{V}}

\newcommand{\Frob}{\operatorname{Frob}}

\newcommand{\ord}{\operatorname{ord}}

\newcommand{\trdeg}{\operatorname{trdeg}}

\newcommand{\ev}{\operatorname{ev}}

\newcommand{\ovl}{\overline}
\newcommand{\udl}{\underline}
\newcommand{\td}{\tilde}
\newcommand{\isoto}{\mathrel{\mathop{\longrightarrow}\limits^{\vbox to 0ex{\vss\hbox{$\scriptstyle\sim$}\vskip -2pt}}}}

\newcommand{\sbe}{\subseteq}

\let\oldforall\forall
\renewcommand{\forall}{\oldforall \: }
\let\oldexist\exists
\renewcommand{\exists}{\oldexist \: }

\newcommand{\wideand}{\quad \text{and} \quad}
\newcommand{\widewhere}{\quad \text{where} \quad}
\newcommand{\wideforall}{\quad \text{for all} \quad}
\newcommand{\widefor}{\quad \text{for} \quad}

\newcommand{\Fq}{\FF_q}
\newcommand{\Fqd}{\FF_{q^d}}

\newcommand{\T}{\theta}

\newcommand{\bfy}{\mathbf{y}}

\newcommand{\iaf}{\Pi_\infty^{\textnormal{ari}}}
\newcommand{\biaf}{\boldsymbol{\Pi}_{\infty}^{\textnormal{ari}}}
\newcommand{\fia}{f_\infty}

\newcommand{\vaf}{\Pi_v^{\textnormal{ari}}}
\newcommand{\bvaf}{\boldsymbol{\Pi}_{v}^{\textnormal{ari}}}

\newcommand{\ags}{G^{\textnormal{ari}}}
\newcommand{\fG}{f_{G,\l}}

\newcommand{\bfG}{\mathbf{G}^{\textnormal{ari}}}

\newcommand{\pinf}{\partial_\infty}
\newcommand{\pv}{\partial_v}

\newcommand{\Aari}{\Acal^{\textnormal{ari}}_\l}
\newcommand{\Rari}{\Rcal^{\textnormal{ari}}_\l}
\newcommand{\Fari}{\Fcal^{\textnormal{ari}}_\l}
\newcommand{\Uari}{\Ucal^{\textnormal{ari}}_\l}
\newcommand{\Vari}{\Vcal^{\textnormal{ari}}_\l}

\newcommand{\EariQ}{\Ecal^{\textnormal{ari}}_{\l,\QQ}}
\newcommand{\FariQ}{\Fcal^{\textnormal{ari}}_{\l,\QQ}}
\newcommand{\UariQ}{\Ucal^{\textnormal{ari}}_{\l,\QQ}}

\let\l\ell

\DeclarePairedDelimiter{\ang}{\langle}{\rangle}

\newcommand{\lrfloor}[1]{\left\lfloor #1 \right\rfloor}

\newcommand{\lra}{\longrightarrow}
\newcommand{\ltra}{\relbar\joinrel\twoheadrightarrow}
\tikzcdset{arrow style=math font}

\hypersetup{
colorlinks=true,
linkcolor=blue,
urlcolor=black,
citecolor=red,
linktoc=page
}

\title[On Special Arithmetic Gamma Values and Thakur's Gauss Sums]{On Special Arithmetic Gamma Values and Thakur's Gauss Sums: From $\infty$-adic to $v$-adic}
\author{Ting-Wei Chang}
\date{\today}

\subjclass[2020]{Primary 11R58; Secondary 11L05, 11J91}
\keywords{Function field, Gamma value, Gauss sum}
\thanks{This work is supported by the National Science and Technology Council grant nos. 114-2628-M-007-003, 115-2628-M-007-008, and 115-2917-I-007-003.}

\begin{document}

	\begin{abstract}
		In this paper, we establish an explicit algebra isomorphism between the normalized $\infty$-adic arithmetic gamma values and their $v$-adic counterparts.
		The construction relies on Thakur’s arithmetic Gauss sums, which arise naturally from the Gross-Koblitz-Thakur relations.
		Our result demonstrates the phenomenon that Gauss sums, commonly regarded as “finite” analogs of gamma functions, serve as a bridge connecting the $\infty$-adic and $v$-adic gamma values.
	\end{abstract}
	
	\maketitle
	
	\section{Introduction}
	
	\subsection{Motivation}
	
	The values of the classical Euler gamma function at non-integral rational arguments possess deep connections with CM periods, as demonstrated by the Chowla-Selberg formula \cite{sc1967epstein} (see also \cite{gross1978ontheperiods}).
	Conjecturally, all these values are transcendental over $\QQ$, with specific cases established by Chudnovsky \cite{chudnovsky1984contributions}.
	In this direction, the folklore Lang-Rohrlich conjecture asserts that all $\ovl{\QQ}$-algebraic relations among these special values are explained by the standard functional equations satisfied by the Euler gamma function.
	
	On the other hand, Ogus' $p$-adic analog of the Chowla-Selberg formula \cite{ogus1990p-adic} relates the values of Morita's $p$-adic gamma function \cite{morita1975padic} at $p$-integral rational numbers to $p$-adic CM periods.
	Furthermore, the Gross-Koblitz formula \cite{gk1979gauss} expresses Frobenius-twisted products of such values in terms of Gauss sums, which yields additional algebraic relations among them.
	Together with the standard functional equations, these relations are conjectured to account for all algebraic relations among these special $p$-adic gamma values.
	
	The analogy between Euler and Morita $p$-adic gamma values naturally raises the problem of establishing explicit connections between them.
	An answer to this question was provided by Deligne's theorem on special Euler gamma values together with Gross-Koblitz's reformulation.
	Specifically, Deligne (see \cite[Theorem 7.15]{deligne1982hodge}) considered particular Euler gamma monomials (i.e., an element in the subgroup of $\CC^\times$ generated by special Euler gamma values and $2\pi\sqrt{-1}$).
	Using Hodge theory on Fermat hypersurfaces, he proved the algebraicity of such monomials and interpreted the actions of the geometric Frobenius on these numbers in terms of Gauss sums and Hecke characters (see also \cite[Theorem 4.5]{gk1979gauss}).
	This result enabled Gross-Koblitz to relate these Euler gamma monomials to their $p$-adic counterparts, which yields a direct connection between Archimedean and $p$-adic special gamma values.
	We refer the reader to \cite[Theorem 4.6]{gk1979gauss} for the precise formula.
	
	This particular connection motivates us to study a similar phenomenon over function fields in positive characteristic.
	In the present paper, we focus on the \textit{arithmetic gamma functions} introduced by Goss \cite[Appendix]{goss1980modular}, and take the investigation further by connecting the algebras generated by all special values of these functions.
	
	\subsection{The main result}
		
	Throughout this paper, let $p>0$ be any prime number and $q$ be a power of $p$.
	We consider the polynomial ring $A := \Fq[\T]$ over a finite field $\Fq$ of $q$ elements with field of fractions $k := \Fq(\T)$.
	Let $k_\infty$ be the completion of $k$ at the infinite place with a fixed algebraic closure $\ovl{k}_\infty$, and $\ovl{k}$ be the algebraic closure of $k$ inside $\ovl{k}_\infty$.
	On the other hand, for a monic irreducible polynomial $v \in A$, we let $k_v$ be the completion of $k$ at $v$ with a fixed algebraic closure $\ovl{k}_v$, and choose an identification from $\ovl{k}$ into $\ovl{k}_v$.
	
	By interpolating the Carlitz factorials at all places of $k$, Goss \cite[Appendix]{goss1980modular} defined the \textit{$\infty$-adic} (resp. \textit{$v$-adic}) \textit{arithmetic gamma function} $\iaf: \ZZ_p \to k_\infty^\times$ (resp. $\vaf: \ZZ_p \to k_v^\times$).
	Since their introduction, these functions have been extensively studied in several directions, particularly regarding the functional equations and the transcendence of their special values at $p$-integral rational numbers.
	We refer the reader to \S\ref{subsec-arithmetic-gamma-functions} for precise definitions and a historical overview.
	Among these results is Thakur's analog of Deligne's theorem \cite[Theorem 4.6]{thakur1991gamma}, which leads to a direct connection between special $\infty$-adic arithmetic gamma monomials and their $v$-adic counterparts.
	We recall his result in \S\ref{subsubsec-a-comparison-with-thakur's-formula}.
	
	The main goal of this paper is to generalize this phenomenon from individual monomials to the algebras generated by all special arithmetic gamma values.
	This motivates us to introduce the \textit{normalized arithmetic gamma values}
	$$
	\biaf(y) \in \ovl{k}_\infty^\times
	\wideand
	\bvaf(y) \in \ovl{k}_v^\times
	\widefor
	y \in \QQ \cap \ZZ_p
	$$
	defined by modifying $\iaf(y)$ and $\vaf(y)$ by suitable algebraic factors (see Definitions \ref{defn-biaf} and \ref{defn-bvaf}).
	Then our main theorem can be stated as follows.
	
	\begin{thm}[see Theorem \ref{thm-main}]    \label{thm-main-thm-intro}
		For each $\l \in \NN$, there exists an explicit $\ovl{k}$-algebra isomorphism
		$$
		\bfG: \ovl{k} \left[\biaf(y) \Bigm| y \in \frac{1}{q^\l-1} \ZZ \right]
		\isoto
		\ovl{k} \left[\bvaf(y) \Bigm| y \in \frac{1}{q^\l-1} \ZZ \right]
		$$
		given by $\bfG(\biaf(y)) := \bvaf(y) \ags(y)$, where the map $\ags: \QQ \cap \ZZ_p \to \ovl{k}_v^\times$ is defined through Thakur's arithmetic Gauss sums (see \eqref{eq-gauss-sum}).
	\end{thm}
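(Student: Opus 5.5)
Both algebras are finitely generated over $\ovl{k}$. The natural plan is to compute their presentations explicitly and show that the proposed assignment respects the relations. Concretely, for fixed $\l$ I would consider the polynomial ring $\ovl{k}[X_y \mid y \in \frac{1}{q^\l-1}\ZZ]$ with its two surjections
\[
X_y \mapsto \biaf(y) \quad\text{and}\quad X_y \mapsto \bvaf(y).
\]
The theorem follows once I show that the kernels agree after twisting the second surjection by the units $\ags(y)$. That is, I must show that $\mathcal{I}_\infty$ and $\mathcal{I}_v$ correspond under the automorphism $X_y \mapsto \ags(y) X_y$, where $\mathcal{I}_\infty$ is the ideal of algebraic relations among the $\biaf(y)$ and $\mathcal{I}_v$ the analogous ideal for the $\bvaf(y)$. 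Then $\bfG$ is well defined, surjective, and injective, and it is automatically a $\ovl{k}$-algebra map.

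\textbf{Step 1: the $\infty$-adic ideal.} I would invoke the known transcendence results for special $\infty$-adic arithmetic gamma values (the analogue of the Lang--Rohrlich statement proved via Anderson--Brownawell--Papanikolas / Chang--Papanikolas--Thakur--Yu). These say that every algebraic relation among the $\iaf(y)$ comes from the standard functional equations: reflection, multiplication, translation by integers, and the Frobenius relation $y \mapsto qy$. After the normalization in Definition~\ref{defn-biaf}, I expect the normalized values to satisfy \emph{monomial} relations only. Specifically, the Frobenius relation should become $\biaf(qy) = \biaf(y)^{q}$, or a similar clean form, up to the algebraic factors absorbed in the normalization. So $\mathcal{I}_\infty$ should be generated by binomials $\prod X_y^{m_y} - c\prod X_y^{n_y}$, with the multiplicative group of relations described by a lattice $L \subset \ZZ^{\frac{1}{q^\l-1}\ZZ/\ZZ}$ (together with translation, which is absorbed into the normalization).

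\textbf{Step 2: the $v$-adic ideal.} Analogously, the $v$-adic transcendence theorem (Chang--Wei--Yu type results on $v$-adic arithmetic gamma values) should say that $\mathcal{I}_v$ is generated by the standard functional equations together with the Thakur analogue of Gross--Koblitz. The latter expresses suitable Frobenius-twisted products of $\vaf$ values as Thakur's Gauss sums. Since $\ags$ is built precisely from these Gauss sums \eqref{eq-gauss-sum}, the plan is to verify that multiplying by $\ags(y)$ cancels the Gauss-sum factors. After this cancellation, the twisted values $\bvaf(y)\ags(y)$ satisfy exactly the same monomial relations, with the same constants $c$, as the $\biaf(y)$.

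\textbf{Step 3: matching the ideals.} I would check that the $\ags$-twisted binomials coincide with the $\infty$-adic ones. This has two parts. First, the same exponent lattice $L$ appears. This should follow because both transcendence degrees are given by the same combinatorial count (roughly $1+\frac{(q-1)\l}{\cdots}$-type formulas depending only on orbits of $y \mapsto qy$), so it suffices to show $L_\infty \subseteq L_v$ after twisting and compare ranks. Second, the constants match. This is the main obstacle: it requires a careful computation of the algebraic constants in the Gross--Koblitz--Thakur formula versus the $\infty$-adic Frobenius relation, via Thakur's analogue of Deligne's theorem (recalled in \S\ref{subsubsec-a-comparison-with-thakur's-formula}). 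I would first verify it on generators $y = a/(q^\l-1)$ with $0 \le a < q^\l-1$, using the digit expansion of $a$ in base $q$, and then extend it to all $y$ by the translation relations built into the normalizations.
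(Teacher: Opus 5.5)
Your framework is the paper's: present both algebras as quotients of one polynomial ring by binomial ideals, and use the transcendence theorems (Chang--Papanikolas--Thakur--Yu and Wei on the $\infty$-adic side, Chang--Wei--Yu on the $v$-adic side) to show that the monomial relations are all the relations. But the step that makes the theorem true is missing, and what you put in its place is false. There is no Frobenius relation among arithmetic gamma values. The $\iaf(q^i/(1-q^\l))$, $0\le i<\l$, are algebraically independent, and no relation $\biaf(qy)=c\,\biaf(y)^q$ with $c\in\ovl{k}^\times$ can hold: for instance $\biaf(\bfy_d)$ and $\biaf(\bfy_{d+1})=\biaf(q\bfy_d)$ are algebraically independent once $d\l-d\ge 2$ (Corollary~\ref{cor-trans-degree-of-inf}). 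The missing idea is that dividing by $\iaf(\pv(y))$ \emph{creates} an $\infty$-adic Gross--Koblitz relation. Since $\pv$ fixes $q^j/(1-q^d)$, Definition~\ref{defn-biaf} gives $\biaf(q^j/(1-q^d))=1$, and strong multiplication then yields $\prod_{i=0}^{\l-1}\biaf(-\ang{q^{di}y})=1$. Structurally, a monomial $\prod_y\biaf(y)^{m_y}$ is algebraic exactly when $\sum_y m_y[y]\in\FariQ$ is fixed by $\pv$ (acting by $[\bfy_i]\mapsto[\pv(\bfy_i)]$). That means it is a combination of the orbit sums $\sum_{i}[\bfy_{di+j}]$, which are precisely the Gross--Koblitz relations \eqref{eq-gk-symbol}. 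This is what lowers the $\infty$-adic transcendence degree from $d\l$ to $d\l-d$ and makes the two relation lattices coincide. With the relations you list, the inclusion $L_\infty\subseteq L_v$ in your Step~3 would be checked against the wrong generators.

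Once this is seen, the constants need no Deligne-type input; Thakur's formula only covers monomials satisfying \eqref{eq-deligne-thakur-condition} anyway. One checks the three kinds of generators directly:
\begin{itemize}
\item Strong multiplication holds with constant $1$ for $\biaf$, $\bvaf$ and $\ags$, since they are built from digit-product functions and $\pinf,\pv$ are additive on carry-free sums.
\item Translation gives the same factor $L_n/L_{n-d}$ for $\biaf$ and $\bvaf$, and $\ags$ is $\ZZ$-periodic. So translation is not absorbed by the normalization; the constants simply coincide.
\item For Gross--Koblitz, Definition~\ref{defn-bvaf} and \eqref{eq-gk-formula} give $\bvaf(q^j/(1-q^d))=1/g_j$, while $\ags(q^j/(1-q^d))=g_j$.
\end{itemize}
Your rank count is also off. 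For denominator $q^\l-1$ both transcendence degrees equal $\l-\gcd(d,\l)$, governed by orbits of $y\mapsto q^dy$ rather than $y\mapsto qy$. On the normalized $\infty$-adic side this number is not in the literature. The paper derives it, after passing to denominators $q^{d\l}-1$, from Wei's injectivity on $\FariQ$, by showing that the classes $[y]-[\pv(y)]$ span a group with basis $[\bfy_i]-[\pv(\bfy_i)]$, $d\le i<d\l$ (Proposition~\ref{prop-rank-basis-of-V}). Finally, ``containment plus equal rank'' gives equality of lattices only because relation lattices are saturated (a value with an algebraic power is algebraic). This holds, but it has to be said.
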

	
	Note that this result demonstrates the phenomenon that Gauss sums, commonly regarded as “finite” analogs of gamma functions, serve as a bridge connecting the $\infty$-adic and $v$-adic gamma values.
	We also compare this result with Thakur's relation for special arithmetic gamma monomials in \S\ref{subsubsec-a-comparison-with-thakur's-formula}.
	
	\begin{rem}      \label{rem-geo-and-two}
		Besides the arithmetic case studied in this paper, there are two other analogs of gamma functions in positive characteristic, the \textit{geometric} and the \textit{two-variable} gamma functions (see \cite[Section 5]{thakur1991gamma} and \cite[Subsection 9.9]{goss1996basic}).
		Together with the notion of geometric Gauss sums introduced by the author \cite{chang2025geometric}, we believe that the phenomenon demonstrated in Theorem \ref{thm-main-thm-intro} also occurs in these cases.
		This problem will be investigated in future work.
	\end{rem}
	
	\begin{rem}
		The study of connecting special values in the $\infty$-adic and $v$-adic sides goes far beyond the context of gamma functions.
		For instance, it was conjectured by Furusho \cite{furusho2006multiple}, \cite{furusho2007p-adic} that there exists a well-defined surjective $\QQ$-linear map which sends the classical real multiple zeta values to their $p$-adic counterparts.
		In the function field setting, a stronger version of an analogous conjecture was proved by Chang-Mishiba \cite{cm2021conjecture}, who established a well-defined surjective $\ovl{k}$-linear homomorphism sending the $\infty$-adic multiple zeta values to their $v$-adic counterparts.
		It would be interesting to investigate such a question for other families of special values.
	\end{rem}
	
	\subsection{Outline of the paper}
	
	This paper is organized as follows.
	In \S\ref{subsec-arithmetic-gamma-functions}, we review the definitions and the transcendence results concerning the arithmetic gamma functions.
	We then introduce the normalizations of their special values in \S\ref{subsec-normalized-arithmetic-gamma-values} and state the main result in \S\ref{subsec-the-main-result}.
	
	The remaining part of the paper (\S\ref{sec-proof-of-the-main-result}) is devoted to proving the main theorem.
	Following preliminary lemmas (\S\ref{subsec-preliminary-lemmas}), we establish the standard monomial relations among normalized arithmetic gamma values in \S\ref{subsec-monomial-relations-of-normalized-gamma-values}.
	Using the notion of “gamma distributions”, we show in \S\ref{subsec-transcendence-of-normalized-gamma-values} that these relations account for all algebraic relations among normalized gamma values.
	Finally, in \S\ref{subsec-the-algebra-isomorphism}, we utilize these transcendence results to identify the $\ovl{k}$-algebras in question and conclude the proof of the desired isomorphism.
	
	\section{Normalized arithmetic gamma values}
	
	\subsection{Arithmetic gamma functions}   \label{subsec-arithmetic-gamma-functions}
	
	\subsubsection{Carlitz factorials}      \label{subsubsec-carlitz-factorials}
	
	In \cite{carlitz1935oncertain}, Carlitz introduced the notion of factorial function in positive characteristic as follows.
	For $i \geq 0$, let $D_i$ be the product of all monic polynomials in $A$ of degree $i$.
	Then for $n = \sum_{i=0}^{s} n_i q^i \in \NN$ with $0 \leq n_i < q$ for all $i$, the \textit{Carlitz factorial} is defined as
	$$
	\Pi(n) := \prod_{i=0}^s D_i^{n_i} \in A.
	$$
	This is regarded as an analog of the usual factorial function in view of Sinnott's result (see \cite[Theorem 9.1.1]{goss1996basic}) that
	$$
	\ord_f(\Pi(n)) = \sum_{e \geq 1} \lrfloor{\frac{n}{q^{e \deg f}}}
	$$
	for any monic irreducible polynomial $f \in A$.
	
	\subsubsection{Arithmetic gamma functions}     \label{subsubsec-arithmetic-gamma-functions}
	
	In \cite[Appendix]{goss1980modular}, Goss gave Morita-style interpolations of the Carlitz factorials at all places of $k$, which are now called the arithmetic gamma functions.
	In the $\infty$-adic case, we put $\ovl{a} := a \cdot \T^{-\deg a} \in k$ for any monic polynomial $a \in A$, so that $\ovl{a}$ is a one-unit in $k_\infty$.
	Then the \textit{$\infty$-adic arithmetic gamma function} $\iaf: \ZZ_p \to k_\infty^\times$ is defined as
	\begin{equation}      \label{eq-iaf}
		\iaf\left( \sum_{i=0}^\infty y_iq^i \right) := \prod_{i=0}^{\infty} \ovl{D}_i^{y_i}
	\end{equation}
	where $0 \leq y_i < q$ for all $i$.
	
	In the $v$-adic setting, for $i \geq 0$, we let $\td{D}_i$ be the product of all monic polynomials in $A$ of degree $i$ which are relatively prime to $v$.
	Then the \textit{$v$-adic arithmetic gamma function} $\vaf: \ZZ_p \to k_v^\times$ is defined as 
	\begin{equation}         \label{eq-vaf}
		\vaf\left( \sum_{i=0}^\infty y_iq^i \right) := \prod_{i=0}^{\infty} (-\td{D}_i)^{y_i}
	\end{equation}
	where $0 \leq y_i < q$ for all $i$.
	
	\begin{rem}
		Following \cite[\nopp 0.4]{thakur1991gamma}, one also defines the arithmetic gamma functions by shifting \eqref{eq-iaf} and \eqref{eq-vaf} by one, referring to the objects defined above as the \textit{arithmetic factorial functions}.
		In this paper, we adopt the original terminology of Goss \cite{goss1980modular} and abbreviate them to gamma functions (recall Remark \ref{rem-geo-and-two}).
	\end{rem}
	
	\subsubsection{Functional equations}
	
	The standard functional equations of these two gamma functions, namely the translation, reflection, and multiplication formulas, were well-established by Thakur in \cite{thakur1991gamma}.
	In fact, since both of them are functions on $\ZZ_p$ of the form $f(\sum y_i q^i) := \prod f_i^{y_i}$, these identities were proved formally through $p$-adic digit manipulations.
	We refer the reader to Theorem \ref{thm-general-fe} (and Remark \ref{rem-functional-eq}) for the precise formulas.
	
	\subsubsection{Transcendence of \texorpdfstring{$\infty$}{infty}-adic gamma values}
	
	We now consider the $\infty$-adic and $v$-adic gamma functions evaluated at $y \in \QQ \cap \ZZ_p$, called the special gamma values, and their corresponding transcendence results.
	
	In the $\infty$-adic case, it follows immediately from the definition that $\iaf(n) \in k$ for all $n \in \ZZ_{\geq 0}$.	
	On the other hand, by relating the special gamma values with Carlitz period, Thakur \cite{thakur1991gamma} showed that $\iaf(n)$ is transcendental over $k$ for $n \in \ZZ_{<0}$ and obtained an analog of the Chowla-Selberg formula.
	Furthermore, using the automata methods, he \cite{thakur1996transcendence} also established the transcendence of $\iaf(q^i/(1-q^\l))$ for $\l \in \NN$ and $0 \leq i < \l$.
	This result was later generalized by Allouche \cite{allouche1996transcendence}, who showed that $\iaf(y)$ is transcendental for all $y \in (\QQ \cap \ZZ_p) \setminus \ZZ_{\geq0}$.
	
	Turning to their algebraic relations, Chang-Papanikolas-Thakur-Yu \cite{cpty2010algebraic} showed that for each $\l \in \NN$,
	$$
	\trdeg_{\ovl{k}} \ovl{k}\left( \iaf(y) \Bigm| y \in \frac{1}{q^\l-1}\ZZ \right)
	= \l.
	$$
	Consequently, this implies that all algebraic relations among special $\infty$-adic gamma values are generated by the standard functional equations listed in Theorem \ref{thm-general-fe}.
	Thus, an analog of the Lang-Rohrlich conjecture is proved in this setting.
	We mention that this result was later reproved by Wei \cite{wei2026algebraic} using the notion of distribution relations and the theory of Shimura's conjecture on period symbols developed in \cite{bcpw2022function}.
	
	A key step of this classification involves period interpretations of the special gamma values.
	More precisely, they considered the Carlitz $\FF_{q^\l}[t]$-module (where $t$ is another variable) and regarded it as a Drinfeld $\Fq[t]$-module of rank $\l$ with CM by $\FF_{q^\l}[t]$.
	Then together with Thakur's analog of the Chowla-Selberg formula, they showed that the periods of its associated dual $t$-motive are product of special $\infty$-adic gamma values up to algebraic multiples.
	In \cite{wei2026algebraic}, these values also appear in the pairing comparison between the Betti and de Rham modules as period symbols.
	
	\subsubsection{Transcendence of \texorpdfstring{$v$}{v}-adic gamma values}
	
	In the $v$-adic case, one naturally encounters the Gross-Koblitz phenomenon, which provides extra algebraic relations among special $v$-adic gamma values.
	This is due to the work of Thakur \cite{thakur1988gauss}, where he introduced an analog of Gauss sums over function fields as follows.
	
	Put $d := \deg v$.
	For $j \in \ZZ_{\geq0}$, we let $\chi_j: A/v \to \Fqd \sbe \ovl{k}_v$ be an $\Fq$-algebra homomorphism indexed so that $\chi_j^q = \chi_{j+1}$ and $\chi_0$ is the Teichmüller character.
	Let $\Lambda_v$ be the $v$-torsions of the Carlitz module $C(\ovl{k}_v)$, and choose an $A$-module isomorphism $\psi: A/v \to \Lambda_v$ (see \cite[Section 3]{goss1996basic}).
	Then Thakur's \textit{arithmetic Gauss sums} are defined as
	\begin{equation}     \label{eq-gauss-sum}
		g_j
		:= -\sum_{z \in (A/v)^\times} \chi_j(z^{-1}) \psi(z) \in \ovl{k}_v.
	\end{equation}
	We note that $g_j$ only depends on $j$ modulo $d$.
	
	By Hensel's lemma, there exists a unique $(q^d-1)$-st root $\lambda \in k_v(\psi(1))$ of $-v$ such that $\lambda \equiv \psi(1) \pmod{\psi(1)^2}$.
	Then under this setting, Thakur proved the following analog of the Gross-Koblitz formula: for each $0 \leq j < d$, we have
	\begin{equation}      \label{eq-gk-formula}
		\vaf\left(\frac{q^j}{1-q^d}\right) = -\frac{\lambda^{q^j}}{g_j}.
	\end{equation}
	
	It is natural to ask whether a Lang-Rohrlich phenomenon holds in this setting, that is, whether all algebraic relations among special $v$-adic gamma values are explained by the standard functional equations listed in Theorem \ref{thm-general-fe}, together with the extra relations coming from Thakur's Gross-Koblitz formula \eqref{eq-gk-formula}.
	This conjecture was confirmed by the work of Chang-Wei-Yu \cite{cwy2026vadic}, where they showed that for each $\l \in \NN$,
	$$
	\trdeg_{\ovl{k}} \ovl{k}\left( \vaf(y) \Bigm| y \in \frac{1}{q^\l-1}\ZZ \right)
	= \l - \gcd(d,\l).
	$$
	
	As in the $\infty$-adic case, a key step of this result involves period interpretations of the special $v$-adic gamma values.
	Specifically, by considering the \textit{same} $t$-motive as in the $\infty$-adic case and applying the theory developed by Hartl-Kim \cite{hk2020local}, they obtained a comparison isomorphism between the crystalline and de Rham modules.
	Next, they showed that the special values in question appear in the matrix representation of the Frobenius action on the de Rham module (via the natural “isocrystal” action on the crystalline module).
	This is regarded as an analog of Ogus' $p$-adic Chowla-Selberg formula.
	
	\subsection{Normalized arithmetic gamma values}       \label{subsec-normalized-arithmetic-gamma-values}
	
	\subsubsection{Prime-to-\texorpdfstring{$v$}{v} Carlitz factorials}      \label{subsubsec-prime-to-v-carlitz-factorials}
	
	In order to relate the special $\infty$-adic and $v$-adic gamma values, we consider the following \textit{prime-to-$v$ Carlitz factorials}.
	For $n = \sum_{i=0}^{s} n_i q^i \in \NN$ with $0 \leq n_i < q$ for all $i$, define
	$$
	\td{\Pi}(n) := \prod_{i=0}^s \td{D}_i^{n_i} \in A,
	$$
	where one recalls \S\ref{subsubsec-arithmetic-gamma-functions} that $\td{D}_i$ is the product of all monic polynomials in $A$ of degree $i$ which are relatively prime to $v$.
	In other words, we remove all $v$-multiples from the usual Carlitz factorials (cf. \S\ref{subsubsec-carlitz-factorials}).
	
	Note that by definition,
	\begin{equation}          \label{eq-Di-tilde}
		\td{D}_i =
		\begin{cases}
			D_i, & i<d, \\
			D_i/(v^{q^{i-d}} D_{i-d}), & i \geq d.
		\end{cases}
	\end{equation}
	Thus, if we define $\pv: \ZZ_p \to \ZZ_p$ as
	$$
	\pv\left(\sum_{i=0}^{\infty} y_i q^i\right) := \sum_{i=0}^{\infty} y_{i+d} q^i
	$$
	where $0 \leq y_i < q$ for all $i$, then
	\begin{equation}    \label{eq-morita-carlitz-factorial}
		\td{\Pi}(n)
		= \left( \prod_{i=0}^{d-1} D_i^{n_i} \right)
		\left( \prod_{i=d}^{s} \left( \frac{D_i}{v^{q^{i-d}} D_{i-d}} \right)^{n_i} \right)
		= \frac{\Pi(n)}{\Pi(\pv(n))} \cdot v^{-\pv(n)}.
	\end{equation}
	This motivates us to define the normalized gamma values as follows.
	
	\subsubsection{Normalized \texorpdfstring{$\infty$}{infty}-adic gamma values}
	
	Given any $\alpha \in \ovl{k}^\times$, we call a system $\{\alpha^r\}_{r \in \QQ} \sbe \ovl{k}$ of roots of $\alpha$ \textit{compatible} if they satisfy $\alpha^r \alpha^s = \alpha^{r+s}$ for all $r,s \in \QQ$.
	For the $\infty$-adic case, we take any compatible system $\{\T^r\}_{r \in \QQ}$ of $\T$.
	
	Following \cite[\nopp 3.6]{thakur1991gamma}, we define the map $\pinf: \ZZ_p \to \ZZ_p$ as
	$$
	\pinf\left(\sum_{i=0}^{\infty} y_i q^i\right) := \sum_{i=0}^{\infty} iy_iq^i
	$$
	where $0 \leq y_i < q$ for all $i$.
	Then under this definition, one has $\pinf(n) = \deg \Pi(n)$ for all $n \in \NN$ (see \S\ref{subsubsec-carlitz-factorials}).
	Moreover, we will show in Lemma \ref{lem-pinf-pv-invariant} that the subgroup $\QQ \cap \ZZ_p \sbe \ZZ_p$ is invariant under both $\pinf, \pv$.
	This justifies the following definition with respect to a chosen compatible system $\{\T^r\}_{r \in \QQ}$.
	
	\begin{defn}     \label{defn-biaf}
		For $y \in \QQ \cap \ZZ_p$, we define the \textit{normalized $\infty$-adic arithmetic gamma values}
		$$
		\biaf(y)
		:= \frac{\iaf(y) \T^{\pinf(y)}}{\iaf(\pv(y)) \T^{\pinf\pv(y)}} \in \ovl{k}_\infty^\times.
		$$
	\end{defn}
	
	In particular, by \eqref{eq-morita-carlitz-factorial}, we have
	\begin{equation}     \label{eq-rem-biaf}
		\biaf(n)
		= \frac{\Pi(n)}{\Pi(\pv(n))} 
		= \td{\Pi} (n) v^{\pv(n)}
		\in k
	\end{equation}
	for all $n \in \NN$.
	
	\begin{rem}
		In order to recover the degree part of $\iaf(y)$ for all $y \in \ZZ_p$, Thakur \cite[\nopp 3.7]{thakur1991gamma} originally defined the function $\iaf(y) \T^{\pinf(y)}$ on $\ZZ_p$ with values in the completion $\widehat{k}_\infty^\times := \varprojlim k_\infty^\times / (k_\infty^\times)^{p^n}$.
		Here, since we assumed $y \in \QQ \cap \ZZ_p$, one regards $\iaf(y) \T^{\pinf(y)}$ (and also $\biaf(y)$) as values in $\ovl{k}_\infty^\times$.
	\end{rem}
	
	\subsubsection{Normalized \texorpdfstring{$v$}{v}-adic gamma values}
	
	Next, we consider the $v$-adic case.
	Recall that in Thakur's analog of the Gross-Koblitz formula \eqref{eq-gk-formula}, we defined $\lambda \in k_v(\psi(1))$ to be the unique $(q^d-1)$-st root of $-v$ such that $\lambda \equiv \psi(1) \pmod{\psi(1)^2}$.
	We then take any compatible system $\{\lambda^r\}_{r \in \QQ}$ of $\lambda$.
	
	\begin{defn}        \label{defn-bvaf}
		For $y \in \QQ \cap \ZZ_p$, we define the \textit{normalized $v$-adic arithmetic gamma values}
		$$
		\bvaf(y) := \vaf(y) \lambda^{(q^d-1)\pv(y)} (-1)^{y-q^d\pv(y)} \in \ovl{k}_v^\times.
		$$
	\end{defn}
	
	Note that if $y = \sum_{i=0}^\infty y_iq^i$ with $0 \leq y_i < q$ for all $i$, we have $y-q^d\pv(y) = \sum_{i=0}^{d-1} y_i q^i$ is an integer.
	So the power of $-1$ is defined without ambiguity.
	Moreover, by a similar discussion as in \S\ref{subsubsec-prime-to-v-carlitz-factorials}, one sees that
	\begin{equation}        \label{eq-rem-bvaf}
		\bvaf(n)
		= \frac{\Pi(n)}{\Pi(\pv(n))}
		= \td{\Pi} (n) v^{\pv(n)}
		\in k
	\end{equation}
	for all $n \in \NN$.
	
	Combining \eqref{eq-rem-biaf} and \eqref{eq-rem-bvaf}, we see that
	$$
	\biaf(n) = \bvaf(n)
	\wideforall
	n \in \NN.
	$$
	In other words, after suitable normalizations, the $\infty$-adic and $v$-adic gamma values coincide on $\NN$.
	
	\subsection{The main result}    \label{subsec-the-main-result}
	
	\subsubsection{The Gauss sums map}     \label{subsubsec-the-gauss-sums-map}
	
	For each $j \in \ZZ_{\geq0}$, we fix a compatible system of the arithmetic Gauss sum $g_j$ so that if $j \equiv j' \pmod{d}$, two systems $\{g_j^r\}_{r \in \QQ}$ and $\{g_{j'}^r\}_{r \in \QQ}$ are chosen identically.
	For a real number $y \in \RR$, we let $\ang{y}$ be its fractional part, i.e., $\ang{y}$ is the unique number such that $y \equiv \ang{y} \pmod{\ZZ}$ and $0 \leq \ang{y} < 1$.
	
	\begin{defn}     \label{defn-ari-gauss-sum-map}
		We define the function
		$$
		\ags: \QQ \cap \ZZ_p = \bigcup_{\l\in\NN} \frac{1}{q^{d\l}-1}\ZZ \lra \ovl{k}_v^\times
		$$
		as follows: if $y \in (q^{d\l}-1)^{-1}\ZZ$ for some $\l \in \NN$, we write $\ang{-y} = \sum_{i=0}^{d\l-1} y_iq^i/(q^{d\l}-1)$ with $0 \leq y_i < q$ for all $i$, and define
		$$
		\ags(y) := \prod_{i=0}^{d\l-1} g_i^{y_i/\l}.
		$$
	\end{defn}
	
	As the roots of the arithmetic Gauss sums are chosen compatibly, the function $\ags$ is well-defined on $\QQ \cap \ZZ_p$.
	In other words, it does not depend on the choice of $\l$ for each fixed $y \in \QQ \cap \ZZ_p$.
	
	\subsubsection{The main result}
	
	The following theorem is the main result of this paper, which provides an explicit $\ovl{k}$-algebra isomorphism sending the normalized $\infty$-adic gamma values to their $v$-adic counterparts through the above Gauss sums map.
	
	\begin{thm}      \label{thm-main}
		For each $\l \in \NN$, we have a $\ovl{k}$-algebra isomorphism
		$$
		\bfG: \ovl{k} \left[\biaf(y) \Bigm| y \in \frac{1}{q^\l-1} \ZZ \right]
		\isoto
		\ovl{k} \left[\bvaf(y) \Bigm| y \in \frac{1}{q^\l-1} \ZZ \right]
		$$
		given by $\bfG(\biaf(y)) := \bvaf(y) \ags(y)$.
	\end{thm}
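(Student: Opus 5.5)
The plan is to describe both algebras by generators and relations, and then check that the assignment $\biaf(y)\mapsto \bvaf(y)\ags(y)$ carries one relation module onto the other. For the $\infty$-adic side, the transcendence degree $\l$ from Chang--Papanikolas--Thakur--Yu says that all algebraic relations among the $\iaf(y)$, $y\in\frac{1}{q^\l-1}\ZZ$, come from the standard functional equations of Theorem \ref{thm-general-fe}. These are monomial relations: products of gamma values that lie in $\ovl{k}^\times$. The normalizing factors $\T^{\pinf(y)}$ and $\iaf(\pv(y))\T^{\pinf\pv(y)}$ are again gamma values (note $\pv$ preserves $\frac{1}{q^\l-1}\ZZ$ by Lemma \ref{lem-pinf-pv-invariant}) times algebraic numbers. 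Hence the $\ovl{k}$-algebra $\ovl{k}[\biaf(y)]$ is a group algebra-type quotient: it is spanned by monomials $\prod\biaf(y)^{m_y}$, and the only relations are that certain monomials are algebraic.

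So I would first establish, for both the $\infty$-adic and $v$-adic normalized values, that the algebra is $\ovl{k}[M]$, where $M$ is the multiplicative group generated by the normalized values modulo $\ovl{k}^\times$. The key transcendence input is that monomials which are distinct modulo $\ovl{k}^\times$ are $\ovl{k}$-linearly independent. This follows because the field generated by all values has the stated transcendence degree and the monomial group, modulo torsion and algebraic elements, is free of the same rank (a Kummer-type/character independence argument). Consequently a $\ovl{k}$-linear map defined on monomials is a well-defined algebra isomorphism as soon as one checks the following at the level of monomials:
\[
\prod_y \biaf(y)^{m_y}\in\ovl{k}^\times \iff \prod_y \big(\bvaf(y)\ags(y)\big)^{m_y}\in\ovl{k}^\times .
\]
Here each $\ags(y)$ is a root of a product of Gauss sums, and Gauss sums are algebraic, so $\ags(y)\in\ovl{k}^\times$. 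The condition on the right therefore reduces to the corresponding monomial in $\bvaf$ being algebraic. Note that the exponent vectors $(m_y)$ must range over a lattice of relations, and the map must also respect the algebraic constants: when the monomial is algebraic, the two sides should agree up to $\ovl{k}^\times$ in a way compatible with multiplication. That is automatic, since we are defining $\bfG$ on the group $M$.

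Thus the core computation is a comparison of the two relation lattices.
\begin{enumerate}[label=(\roman*)]
\item Translate the functional equations (translation, reflection, multiplication) into relations for the normalized values, on both sides. The reflection and multiplication formulas, written digitwise, should take the same shape for $\biaf$ and $\bvaf$, because both gamma functions are of the form $\prod f_i^{y_i}$. The normalization by $\pv$ is designed to cancel the difference $D_i$ versus $\td D_i$, as in \eqref{eq-morita-carlitz-factorial}.
\item On the $v$-adic side, add the Gross--Koblitz relations \eqref{eq-gk-formula}. Once $\bvaf(y)\ags(y)$ is used, these should become trivial: $\ags(q^j/(1-q^d))=g_j$ exactly cancels the Gauss sum, and the $\lambda$-power is absorbed by the normalization.
\item Compare ranks: the $v$-adic field has transcendence degree $\l-\gcd(d,\l)$, while the $\infty$-adic one has $\l$. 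This mismatch seems to conflict with an isomorphism. However, the normalized $\infty$-adic values divide by $\iaf(\pv(y))$, which kills exactly $\gcd(d,\l)$ directions (the "$v$-periodic" combinations). So I must show that the normalized $\infty$-adic algebra also has transcendence degree $\l-\gcd(d,\l)$, with relation lattice equal to the image of the standard relations together with those created by the $\pv$-quotient.
\end{enumerate}

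I expect step (iii), identifying the two relation lattices exactly, to be the main obstacle. My approach is to encode exponent vectors as "gamma distributions," i.e.\ functions on $\frac{1}{q^\l-1}\ZZ/\ZZ$, and to describe each lattice via digit-sum/character conditions. Monomials in $\iaf$ are algebraic exactly when a Frobenius-averaged digit condition holds (Deligne--Thakur type criterion). For $\vaf$, the criterion is the same averaged condition taken only over the subgroup generated by $q^{\gcd(d,\l)}$, or modulo the Gross--Koblitz classes. I would then verify that the operator $y\mapsto(y,-\pv(y))$ intertwines these criteria, using $\ags$ to correct for the difference.
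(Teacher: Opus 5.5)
The step that fails is your assertion that compatibility with the algebraic constants is ``automatic, since we are defining $\bfG$ on the group $M$.''

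The algebra $\ovl{k}[\biaf(y)]$ is the $\ovl{k}$-span of the group $\widetilde M$ generated by $\ovl{k}^\times$ and the $\biaf(y)$. Identifying it with $\ovl{k}[M]$, where $M=\widetilde M/\ovl{k}^\times$, requires choosing a splitting, so a map defined on $M$ does not produce the prescribed values. The rule $\biaf(y)\mapsto\bvaf(y)\ags(y)$ extends to a well-defined $\ovl{k}$-algebra map if and only if every identity $\prod_y\biaf(y)^{m_y}=c$ with $c\in\ovl{k}^\times$ is matched by $\prod_y(\bvaf(y)\ags(y))^{m_y}=c$ with the \emph{same} $c$. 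Agreement up to $\ovl{k}^\times$ is not enough.

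Since $\ags(y)\in\ovl{k}^\times$, your criterion cannot see $\ags$ at all, so $\ags$ cannot ``correct'' anything in step (iii). The criterion is equally satisfied by $\biaf(y)\mapsto\bvaf(y)$, and that map is not well defined: $\pv$ fixes $q^j/(1-q^d)$, so $\biaf(q^j/(1-q^d))=1$, whereas $\bvaf(q^j/(1-q^d))=g_j^{-1}$ by \eqref{eq-gk-formula}. The entire role of the Gauss sums lies in these constants, and your argument never tracks them.

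What is needed, and what the paper does, is to prove exact identities with identical constants on both sides:
strong multiplication with constant $1$ (Proposition \ref{prop-strong-multi}, via the no-carry additivity of $\pinf,\pv$ in Lemma \ref{lem-pinf-pv-on-Zp});
Gross--Koblitz products equal to $1$ for both $\biaf$ and $\bvaf\ags$ (Proposition \ref{prop-gk});
and translation with the common factor $L_n/L_{n-d}$ (Proposition \ref{prop-trans}).
Working at level $d\l$, the first two generate the ideal $I_\l$. The quotient $\ovl{k}[X_{[y]}]/I_\l$ is generated by the $X_{[\bfy_i]}^{\pm1}$ with $d\le i<d\l$. The algebraic independence of the $\biaf(\bfy_i)$, resp.\ the $\bvaf(\bfy_i)$, then forces both evaluation maps to be isomorphisms, so relations are transported together with their constants. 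That independence comes from the injectivity theorems of Wei and Chang--Wei--Yu combined with Propositions \ref{prop-rank-basis-of-V} and \ref{prop-rank-basis-of-u}. Your step (iii) corresponds only to this rank count.

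The constant check is not a formality. On the integer class, $\biaf(-1)=\biaf(0)=1$ because $\pv(-1)=-1$. Meanwhile $\ags(-1)=\ags(0)=1$, and $\bvaf(-1)=-\vaf(-1)/v$, with $\vaf(-1)$ a $v$-adic unit, has $v$-adic valuation $-1$. Hence the formula as stated cannot hold on $\ZZ_{<0}$. Translation does not link $0$ to $-1$ (as $\ord_q(0)=\infty$), and Proposition \ref{prop-strong-multi}(3) fails when all digits equal $q-1$. One has to redefine $\ags$ on $\ZZ_{<0}$, e.g.\ as $\prod_{j=0}^{d-1}g_j^{q-1}$. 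The paper's final extension step overlooks this as well, and your criterion could never detect it.
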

	
	\subsubsection{A comparison with Thakur's formula}      \label{subsubsec-a-comparison-with-thakur's-formula}
	
	We now compare Theorem \ref{thm-main} with Thakur's relation for special gamma monomials.
	Given $N \in \NN$ which is relatively prime to $p$, we let $\udl{a} = \sum m_i [a_i]$ be an element in the free abelian group with basis $a_i \in N^{-1}\ZZ/\ZZ \setminus \{0\}$, and put $n(\udl{a}) := \sum m_i\ang{a_i}$.
	For $u \in (\ZZ/N\ZZ)^\times$, we set $\udl{a}^{(u)} := \sum m_i [ua_i]$.
	For any function $f$ on $N^{-1}\ZZ$ taking values in a multiplicative group, we put
	$$
	f(-\udl{a}) := \prod f(-\ang{a_i})^{m_i}.
	$$
	
	For any $y \in \QQ \cap \ZZ_p$, define
	$$
	\Pi_\infty^{\textnormal{ari},*}(y) := \iaf(y) \T^{\pinf(y)}.
	$$
	Consider the following condition:
	\begin{equation}     \label{eq-deligne-thakur-condition}
		\udl{a} = \sum m_i [a_i]
		\text{ satisfies }
		n(\udl{a}^{(q^j)})
		\text{ is an integer independent of }
		j.
	\end{equation}
	Then using his analog of Deligne's theorem \cite[Theorem 4.6]{thakur1991gamma} via interpreting arithmetic Gauss sums as Hecke characters and a general functional equation \cite[Theorem 2.8]{thakur1991gamma}, Thakur showed that if $\udl{a}$ satisfies \eqref{eq-deligne-thakur-condition} with $a_i \in (q^d-1)^{-1}\ZZ/\ZZ \setminus \{0\}$, we have
	\begin{equation}    \label{eq-deligne-thakur-formula}
		\left( \frac{\Pi_\infty^{\textnormal{ari},*}(-\udl{a})}{\td{\pi}^{n(\udl{a})}} \right)^{\Frob_P-1}
		= (-1)^{d \cdot n(\udl{a})}
		= \left( \frac{\vaf(-\udl{a})}{(-1)^{n(\udl{a})}} \right)^{i_P^{-1}}.
	\end{equation}
	Here, $P$ is a prime of $k\Fqd$ above $v$ (which is of degree one), $\Frob_P$ is the $q^d$-th power Frobenius, $\td{\pi}$ is a fixed Carlitz period, and $i_P$ is the canonical embedding from $k\Fqd$ into $k_v$.
	
	Compared with Theorem \ref{thm-main} above, for arbitrary $\udl{a}$, we let $\Frob_P$ act on $\Pi_\infty^{\textnormal{ari},*}(-\udl{a})$ by
	$$
	\Pi_\infty^{\textnormal{ari},*} (-\udl{a})^{\Frob_P}
	:= \Pi_\infty^{\textnormal{ari},*} (-\udl{a}^{(q^d)}).
	$$
	Then by Theorem \ref{thm-main}, we have
	\begin{equation}      \label{eq-bold-deligne-thakur-formula}
		\Pi_\infty^{\textnormal{ari},*} (-\udl{a})^{\Frob_P-1}
		= \biaf(-\udl{a}^{(q^d)})
		= \left( \bvaf(-\udl{a}^{(q^d)}) \ags(-\udl{a}^{(q^d)}) \right)^{i_P^{-1}}.
	\end{equation}
	Thus, our main theorem generalizes the phenomenon demonstrated in Thakur's formula \eqref{eq-deligne-thakur-formula} from a particular class of gamma monomials to the $\ovl{k}$-algebras generated by all special gamma values.
	It is also interesting to note how Gauss sums play a crucial role in both Theorem \ref{thm-main} and \eqref{eq-deligne-thakur-formula}.
	
	\begin{rem}
		When $\udl{a}$ satisfies the same conditions as in Thakur's formula, the values in \eqref{eq-bold-deligne-thakur-formula} are actually $1$.
		It is possible to adopt alternative sign normalizations for $\biaf$ and $\bvaf$ so that it reproduces the exact sign factor $(-1)^{d \cdot n(\udl{a})}$ appearing in \eqref{eq-deligne-thakur-formula}.
		Here, we choose to avoid such normalizations to ease the notations and computations.
	\end{rem}
	
	\section{Proof of the main result}     \label{sec-proof-of-the-main-result}
	
	\subsection{Preliminary lemmas}    \label{subsec-preliminary-lemmas}
	
	We begin the proof of Theorem \ref{thm-main} by establishing the standard monomial relations of the normalized gamma values.
	First, we recall the general functional equations given by Thakur \cite[Section 2]{thakur1991gamma}.
	Note that the corresponding formulas for the gamma functions $\iaf(y),\vaf(y)$ (recall \S\ref{subsubsec-arithmetic-gamma-functions}) follow immediately from Theorem \ref{thm-general-fe} and Remark \ref{rem-functional-eq}.
	
	\begin{thm}     \label{thm-general-fe}
		Let $f$ be a function on $\ZZ_p$ defined by $f(\sum y_i q^i) := \prod f_i^{y_i}$ with $0 \leq y_i < q$ for all $i$, where the convergence of $f$ is assumed.
		\begin{enumerate}
			\item (Translation) For $y \in \ZZ_p$ with $\ord_q(y) = n$,
			$$
			\frac{f(y)}{f(y-1)} = \frac{f_n}{\prod_{i=0}^{n-1} f_i^{q-1}}.
			$$
			
			\item (Strong multiplication) For any $\l \in \NN$,
			\begin{equation}       \label{eq-strong-multi}
				f\left( \sum_{i=0}^{\l-1} \frac{y_iq^i}{1-q^\l} \right)
				= \prod_{i=0}^{\l-1} f\left(\frac{q^i}{1-q^\l}\right)^{y_i},
			\end{equation}
			where $0 \leq y_i < q$ for all $i$.
		\end{enumerate}
	\end{thm}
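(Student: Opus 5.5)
Both identities are purely formal consequences of the digit-product form $f(\sum y_iq^i)=\prod f_i^{y_i}$. I will treat them by manipulating $q$-adic expansions, with no input from the specific $f_i$. Convergence is assumed throughout, so infinite products may be regrouped freely.

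For the translation formula, I take $y\in\ZZ_p$ with $\ord_q(y)=n$. In the statement $\ord_q(y)$ means the index of the first nonzero $q$-adic digit. So I write $y=\sum_{i\ge n} y_iq^i$ with $y_n\neq 0$ and $y_i=0$ for $i<n$. Subtracting $1$ then causes a borrow through the lowest $n$ digits: $y-1$ has digits $q-1$ in positions $0,\dots,n-1$, digit $y_n-1$ in position $n$, and the same digits $y_i$ for $i>n$. This is just the identity $q^n-1=\sum_{i<n}(q-1)q^i$, and all new digits lie in $[0,q)$. Comparing $f(y)=\prod_i f_i^{y_i}$ with $f(y-1)$, every factor with $i>n$ cancels. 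What remains is $f_n^{y_n}/\bigl(f_n^{y_n-1}\prod_{i<n}f_i^{q-1}\bigr)$, which is the claimed quotient.

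For strong multiplication, I fix $\l$ and digits $0\le y_i<q$ for $0\le i<\l$, and put $m=\sum_{i<\l}y_iq^i$, so that $0\le m\le q^\l-1$. The key step is the expansion
$$\frac{m}{1-q^\l}=-m\sum_{k\ge0}q^{k\l}=-m(1+q^\l+q^{2\l}+\cdots).$$
This is a negative number in $\ZZ_p$, and I need its actual digits. I would use that $-1=\sum_{j\ge0}(q-1)q^j$, so $-1/(1-q^\l)\cdot(q^\l-1)=-1$. More directly, $\frac{m}{1-q^\l}=\frac{q^\l-1-m}{q^\l-1}\cdot(-1)\cdot\ldots$ becomes messy, so I instead note
$$\frac{m}{1-q^\l}=\sum_{k\ge0}\bar m\,q^{k\l}-\Bigl(\sum_{k\ge0}(q^\l-1)q^{k\l}\Bigr)\quad\text{with}\quad \bar m=q^\l-1-m .$$
Since $\sum_k (q^\l-1)q^{k\l}=-1$, this gives the cleaner identity $\frac{m}{1-q^\l}=-1+\ldots$. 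At this point I would rather just check that the digits of $\frac{m}{1-q^\l}$ are the periodic sequence $y_0,\dots,y_{\l-1}$ repeated. That is, I verify $\sum_{k\ge0}mq^{k\l}=\frac{m}{1-q^\l}$ as a formal geometric series in $\ZZ_p$. This holds with no carries, because each block $m$ occupies exactly the digit positions $k\l,\dots,k\l+\l-1$.

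With that settled, the digit of $\frac{m}{1-q^\l}$ at position $k\l+i$ is $y_i$, and the same argument applied to $m=q^i$ shows that $\frac{q^i}{1-q^\l}$ has digit $1$ exactly at the positions $k\l+i$. Hence
$$f\Bigl(\frac{m}{1-q^\l}\Bigr)=\prod_{i<\l}\prod_{k\ge0}f_{k\l+i}^{y_i}=\prod_{i<\l}\Bigl(\prod_{k\ge0}f_{k\l+i}\Bigr)^{y_i}=\prod_{i<\l}f\Bigl(\frac{q^i}{1-q^\l}\Bigr)^{y_i}.$$
The regrouping of the infinite product is justified by the assumed convergence.

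The only delicate point is the digit bookkeeping, namely confirming that no carries occur. In the translation case this is the borrow pattern. In the multiplication case it comes down to the bound $m<q^\l$, which puts each block within its own $\l$ digits. Everything else is cancellation of identical factors.
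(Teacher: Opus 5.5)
Your final argument is correct and is the formal $q$-adic digit manipulation the paper alludes to; the paper quotes the result from Thakur rather than proving it. For (1) it is the borrow pattern for $y-1$, and for (2) it is the carry-free periodic expansion $\frac{m}{1-q^\l}=\sum_{k\ge0}mq^{k\l}$, whose digits can be read off because $0\le m<q^\l$.

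You should, however, delete the abandoned detours, because they contain sign errors that contradict the identity you actually use. Since $\frac{1}{1-q^\l}=\sum_{k\ge0}q^{k\l}$, your announced ``key step'' $\frac{m}{1-q^\l}=-m\sum_{k\ge0}q^{k\l}$ is false. Your $\bar m$-identity evaluates to $\frac{m}{q^\l-1}=-\frac{m}{1-q^\l}$, not to $\frac{m}{1-q^\l}$. Likewise $-\frac{q^\l-1}{1-q^\l}=1$, not $-1$.
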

	
	\begin{rem}       \label{rem-functional-eq}
		We mention that Theorem \ref{thm-general-fe} implies the following monomial relations of $f$ for $y \in \QQ \cap \ZZ_p$.
		\begin{enumerate}
			\item (Reflection)
			$$
			f(y) f(-1-y) = f(-1).
			$$
			
			\item (Multiplication) If $(n,q) = 1$,
			$$
			\prod_{i=1}^{n} f\left(\frac{y-i}{n}\right) = f(y-1) f(-1)^{(n-1)/2}.
			$$
			Here, if $n$ is even, we take $f(-1)^{1/2} = \prod f_i^{(q-1)/2}$.
		\end{enumerate}
	\end{rem}
	
	The following proposition follows entirely from the strong multiplication formula \eqref{eq-strong-multi}.
	Recall \S\ref{subsubsec-the-gauss-sums-map} that $\ang{\cdot}$ denotes the fractional part of a real number.
	
	\begin{prop}     \label{prop-strong-gross-koblitz}
		Let $f$ be any function on $\ZZ_p$ satisfying the strong multiplication formula \eqref{eq-strong-multi}.
		Let $\l ,n \in \NN$.
		For any $y \in (q^{n\l}-1)^{-1}\ZZ$, we write $\ang{y} = \sum_{s=0}^{n\l-1} y_sq^s/(q^{n\l}-1)$ with $0 \leq y_s < q$ for all $s$.
		Then we have
		$$
		\prod_{i=0}^{\l-1} f\left(-\ang{q^{ni}y}\right)
		= \prod_{j=0}^{n-1} f\left(\frac{q^j}{1-q^n}\right)^{e_j},
		$$
		where $e_j := \sum_{i=0}^{\l-1} y_{ni+j}$ for each $0 \leq j \leq n-1$.
	\end{prop}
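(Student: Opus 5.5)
The argument is a digit computation followed by two applications of the strong multiplication formula \eqref{eq-strong-multi}, first at level $n\l$ and then at level $n$. Set $L := n\l$. Put $N := \sum_{s=0}^{L-1} y_s q^s$, so that $\ang{y} = N/(q^L-1)$ with $0 \le N < q^L-1$; we may assume $\ang{y}\neq 0$, since otherwise both sides are $f(0)=1$.

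\textbf{Step 1: digits of $-\ang{q^{ni}y}$.} The key observation is that multiplying by $q^{ni}$ modulo $q^L-1$ cyclically rotates the $L$-digit block of $N$ by $ni$ places. Thus
\[
\ang{q^{ni}y} = \frac{N^{(i)}}{q^L-1},
\]
where $N^{(i)}$ has digits $y_{s-ni}$, indices taken modulo $L$. Consequently
\[
-\ang{q^{ni}y} = \sum_{s=0}^{L-1} \frac{y_{s-ni}\, q^s}{1-q^L}.
\]
Strong multiplication at level $L$ gives
\[
f\bigl(-\ang{q^{ni}y}\bigr) = \prod_{s=0}^{L-1} f\!\left(\frac{q^s}{1-q^L}\right)^{y_{s-ni}}.
\]
Taking the product over $0 \le i \le \l-1$, the exponent of $f(q^s/(1-q^L))$ becomes $\sum_i y_{s-ni}$. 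This depends only on $j := s \bmod n$, and equals $e_j$, because $s-ni$ runs over all indices congruent to $j$ modulo $n$ in $\{0,\dots,L-1\}$. So the left side equals
\[
\prod_{j=0}^{n-1}\Bigl(\prod_{m=0}^{\l-1} f\!\left(\frac{q^{nm+j}}{1-q^L}\right)\Bigr)^{e_j}.
\]

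\textbf{Step 2: regrouping at level $n$.} It remains to show that, for each $0 \le j < n$,
\[
\prod_{m=0}^{\l-1} f\!\left(\frac{q^{nm+j}}{1-q^L}\right) = f\!\left(\frac{q^j}{1-q^n}\right).
\]
We have
\[
\frac{q^j}{1-q^n} = \frac{q^j(1+q^n+\dots+q^{n(\l-1)})}{1-q^L},
\]
and the numerator has $L$-digit expansion with digit $1$ exactly at the positions $nm+j$. Applying strong multiplication at level $L$ to this argument yields the identity. Substituting into Step 1 finishes the proof.

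\textbf{Main obstacle.} The only delicate point is the rotation claim in Step 1. Concretely, one must check that $q^{ni}N \equiv N^{(i)} \pmod{q^L-1}$ with $0 \le N^{(i)} < q^L-1$, so that the rotated numerator really is the fractional part. This also requires excluding $N^{(i)} = q^L-1$, i.e.\ all digits equal to $q-1$. That case cannot occur, since rotation preserves the digit multiset and $N < q^L-1$.
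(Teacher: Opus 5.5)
Your proof is correct and follows essentially the same route as the paper: expand each $f(-\ang{q^{ni}y})$ by strong multiplication at level $n\ell$, observe that the total exponent of $f(q^s/(1-q^{n\ell}))$ is $e_j$ whenever $s \equiv j \pmod{n}$, and then regroup using strong multiplication applied to $q^j/(1-q^n) = q^j(1+q^n+\cdots+q^{n(\ell-1)})/(1-q^{n\ell})$. Your explicit justification of the digit-rotation step, including the exclusion of the all-$(q-1)$ digit case, fills in a detail that the paper leaves implicit.
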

	
	\begin{proof}
		For each $0 \leq i \leq \l-1$, write $\ang{q^{ni}y} = \sum_{s=0}^{n\l-1} y_s^{(i)} q^s/(q^{n\l}-1)$ where $0 \leq y_s^{(i)} < q$ for all $s$.
		Then we have
		$$
		\prod_{i=0}^{\l-1} f\left(-\ang{q^{ni}y}\right)
		\overset{\eqref{eq-strong-multi}}{=} \prod_{i=0}^{\l-1} \prod_{s=0}^{n\l-1} f\left(\frac{q^s}{1-q^{n\l}}\right)^{y_s^{(i)}}
		= \prod_{s=0}^{n\l-1} f\left(\frac{q^s}{1-q^{n\l}}\right)^{\sum_{i=0}^{\l-1} y_s^{(i)}}.
		$$
		Note that the exponent $\sum_{i=0}^{\l-1} y_s^{(i)} = e_j$ if $s \equiv j \pmod{n}$.
		Thus, we have
		$$
		\prod_{s=0}^{n\l-1} f\left(\frac{q^s}{1-q^{n\l}}\right)^{\sum_{i=0}^{\l-1} y_s^{(i)}}
		= \prod_{j=0}^{n-1} \left(\prod_{i=0}^{\l-1} f\left( \frac{q^{ni+j}}{1-q^{n\l}} \right)\right)^{e_j}
		\overset{\eqref{eq-strong-multi}}{=} \prod_{j=0}^{n-1} f\left( \frac{q^j}{1-q^n} \right)^{e_j}.
		$$ 
	\end{proof}
	
	Let $d := \deg v$.
	Recall \S\ref{subsec-normalized-arithmetic-gamma-values} that we defined two operators $\pinf, \pv: \ZZ_p \to \ZZ_p$ as
	\begin{equation}        \label{eq-pinf-pv}
		\pinf\left(\sum_{i=0}^{\infty} y_i q^i\right) := \sum_{i=0}^{\infty} iy_iq^i
		\quad
		\text{and}
		\quad
		\pv\left(\sum_{i=0}^{\infty} y_i q^i\right) := \sum_{i=0}^{\infty} y_{i+d} q^i
	\end{equation}
	where $0 \leq y_i < q$ for all $i$.
	The next two lemmas concern their basic properties.
	
	\begin{lem}    \label{lem-pinf-pv-on-Zp}
		For $a,b \in \ZZ_p$, write $a = \sum a_iq^i$ and $b = \sum b_iq^i$ where $0\leq a_i,b_i < q$ for all $i$.
		Set $a_0 + b_0 = c_0 + \delta_0 q$ where $0 \leq c_0 < q$ and $\delta_0 = 0,1$, and for each $i \geq 1$, we define $c_i, \delta_i$ recursively so that $\delta_{i-1} + a_i + b_i = c_i + \delta_iq$ where $0 \leq c_i < q$ and $\delta_i = 0,1$.
		
		(1) We have
		$$
		\pinf(a+b) = \pinf(a) + \pinf(b) + q \sum_{i=0}^{\infty} \delta_iq^i
		\wideand
		\pv(a+b) = \pv(a) + \pv(b) + \delta_{d-1}.
		$$
		In particular, if $a+b$ has no carry over base $q$ (i.e., $\delta_i=0$ for all $i$), then
		$$
		\pinf(a+b) = \pinf(a) + \pinf(b)
		\wideand
		\pv(a+b) = \pv(a) + \pv(b).
		$$
		
		(2) If $\ord_q(a) = n$, then
		$$
		-\pinf(a) = \pinf(-a) + \frac{q^{n+1}}{1-q}
		\wideand
		-\pv(a) = \pv(-a) + \eta_a
		$$
		where $\eta_a := 0$ if $d \leq n$ and $1$ otherwise.
	\end{lem}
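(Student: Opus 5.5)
The plan is to prove both parts by a direct digit computation, using the carry description of base-$q$ addition. Part (2) then follows by specializing part (1) to $b=-a$.

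For (1), set $\delta_{-1}:=0$, so that the digits of $a+b$ are $c_i = \delta_{i-1}+a_i+b_i-\delta_i q$ for all $i\ge 0$. Substituting into the definition of $\pinf$ gives
$$
\pinf(a+b)=\sum_{i\ge0} i c_i q^i=\sum_{i\ge0} i(a_i+b_i)q^i+\sum_{i\ge0} i\delta_{i-1}q^i-\sum_{i\ge0} i\delta_i q^{i+1}.
$$
I will reindex the middle sum as $\sum_{i\ge0}(i+1)\delta_i q^{i+1}$. It then telescopes against the last sum and leaves $\sum_{i\ge0}\delta_i q^{i+1}=q\sum_i\delta_iq^i$, as claimed.

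Similarly,
$$
\pv(a+b)=\sum_{i\ge0}c_{i+d}q^i=\pv(a)+\pv(b)+\sum_{i\ge0}\delta_{i+d-1}q^i-\sum_{i\ge0}\delta_{i+d}q^{i+1}.
$$
Here the two carry sums cancel except for the $i=0$ term $\delta_{d-1}$ of the first. The no-carry statement is then immediate.

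The only point needing care is that these are infinite sums in $\ZZ_p$, so the splitting and reindexing must be justified. I will do this by working modulo $q^N$ for each $N$. Modulo $q^N$ all the sums become finite, and the manipulations above are then identities of integers; letting $N\to\infty$ gives the identities in $\ZZ_p$. Equivalently, each series has terms tending to $0$ $q$-adically, so rearrangement is permitted. I expect this bookkeeping to be the only real obstacle, and it is a mild one.

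For (2), I apply (1) with $b=-a$. Then $a+b=0$ and $\pinf(0)=\pv(0)=0$. If $a=0$ there is nothing to prove, so assume $\ord_q(a)=n<\infty$, meaning $a_i=0$ for $i<n$ and $a_n\neq0$. The digits of $-a$ are then
$$
b_i=0 \ (i<n),\qquad b_n=q-a_n,\qquad b_i=q-1-a_i \ (i>n).
$$
Reading off the addition, $\delta_i=0$ for $i<n$ and $\delta_i=1$ for all $i\ge n$. Hence $q\sum_i\delta_iq^i=\sum_{i\ge n}q^{i+1}=q^{n+1}/(1-q)$, and (1) yields $0=\pinf(a)+\pinf(-a)+q^{n+1}/(1-q)$, which is the first formula. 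For the second, $\delta_{d-1}=1$ exactly when $d-1\ge n$, i.e. when $n<d$. So $\delta_{d-1}=\eta_a$, and (1) gives $0=\pv(a)+\pv(-a)+\eta_a$, which rearranges to $-\pv(a)=\pv(-a)+\eta_a$.
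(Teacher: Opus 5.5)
Your proof is correct and follows essentially the same route as the paper: you substitute $c_i=\delta_{i-1}+a_i+b_i-\delta_iq$ and telescope the carry sums for (1), then specialize to $b=-a$ for (2). Your version also makes explicit two points the paper leaves to the reader, namely the carry pattern ($\delta_i=0$ for $i<n$, $\delta_i=1$ for $i\ge n$) and the justification for rearranging the $q$-adically convergent sums.
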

	
	\begin{proof}
		The second assertion follows from the first by considering $a + (-a)$.
		For (1), note that $a + b = \sum c_i q^i$ where $0 \leq c_i < q$ for all $i$.
		Then by the definition of $\pinf, \pv$ (recall \eqref{eq-pinf-pv}), we have
		$$
		\pinf(a+b)
		= \sum_{i=1}^\infty i c_i q^i
		= \sum_{i=1}^\infty iq^i (a_i + b_i + \delta_{i-1} - \delta_i q),
		$$
		where one sees that
		$$
		\sum_{i=1}^\infty iq^i (\delta_{i-1} - \delta_i q)
		= q \sum_{i=0}^{\infty} \delta_i q^i.
		$$
		Also,
		$$
		\pv(a+b)
		= \sum_{i=0}^\infty c_{i+d} q^i
		= \sum_{i=0}^\infty (a_{i+d} + b_{i+d} + \delta_{i+d-1} - \delta_{i+d} q) q^i,
		$$
		and we have
		$$
		\sum_{i=0}^\infty (\delta_{i+d-1} - \delta_{i+d} q) q^i
		= \delta_{d-1}.
		$$
		This completes the proof.
	\end{proof}
	
	\begin{lem}   \label{lem-pinf-pv-invariant}
		The subgroup $\QQ \cap \ZZ_p \sbe \ZZ_p$ is invariant under $\pinf,\pv$.
	\end{lem}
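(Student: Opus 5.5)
The statement says that for every $y\in\QQ\cap\ZZ_p$ the images $\pinf(y)$ and $\pv(y)$ again lie in $\QQ$. I will use the standard characterization: an element of $\ZZ_p$ lies in $\QQ$ exactly when its base-$q$ digit sequence $(y_i)$ is eventually periodic. One direction is the usual geometric series computation. For the other, a rational $y=a/b$ with $p\nmid b$ has digits produced by a recursion with finitely many states (the remainders). Hence, for $y\in\QQ\cap\ZZ_p$, there exist $N\ge 0$ and $m\ge1$ such that $y_{i+m}=y_i$ for all $i\ge N$.

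For $\pv$ the argument is immediate. The digits of $\pv(y)$ are $y_{i+d}$, the shifted sequence, which is still eventually periodic (with threshold $\max(N-d,0)$ and the same period $m$). So $\pv(y)\in\QQ\cap\ZZ_p$. Concretely, one can also write $\pv(y)=(y-\sum_{i<d}y_iq^i)/q^d$, which is visibly rational, and this avoids the characterization entirely.

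For $\pinf$, I will split $y=y'+y''$ with $y'=\sum_{i<N}y_iq^i\in\NN$ and $y''=\sum_{i\ge N}y_iq^i$. These have disjoint digit supports, so $\pinf(y)=\pinf(y')+\pinf(y'')$ directly from the definition \eqref{eq-pinf-pv}; alternatively, use Lemma \ref{lem-pinf-pv-on-Zp}(1) with no carries. The first term is an integer. For the second, write $i=N+km+r$ with $0\le r<m$ and $k\ge0$, and put $c_r:=y_{N+r}$. Then
\[
\pinf(y'')=\sum_{r=0}^{m-1}c_r q^{N+r}\sum_{k\ge0}(N+r+km)q^{km}.
\]
Evaluating the inner series in $\ZZ_p$ gives $\sum_{k\ge0}q^{km}=\frac{1}{1-q^m}$ and $\sum_{k\ge0}kq^{km}=\frac{q^m}{(1-q^m)^2}$. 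Both identities hold in $\ZZ_p$ by formal power series manipulation, since $q^m\to0$ $p$-adically. Thus $\pinf(y'')$ is a finite $\ZZ$-combination of rationals with denominators dividing $(1-q^m)^2$, so it is rational.

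The only step needing care is justifying the rearrangement of the series in $\ZZ_p$, together with the closed form for $\sum kx^k$ at $x=q^m$. This is routine: the double sum converges absolutely in the $p$-adic sense, and the identity $\sum_k kx^k=x/(1-x)^2$ holds in $\ZZ[[x]]$ and specializes under the continuous evaluation $x\mapsto q^m$.
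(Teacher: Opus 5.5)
Your proof is correct and is essentially the paper's argument. The paper likewise uses the no-carry additivity of Lemma \ref{lem-pinf-pv-on-Zp}(1) to reduce to periodic digit blocks $q^i/(1-q^\l)$, and it sums the same geometric series to get $\pinf\bigl(\frac{q^i}{1-q^\l}\bigr) = \frac{iq^i}{1-q^\l} + \frac{\l q^{\l+i}}{(1-q^\l)^2}$.

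Your head/tail split spells out the reduction the paper compresses into ``we may assume,'' and keeps it carry-free. Your identity $\pv(y) = q^{-d}\bigl(y - \sum_{i<d} y_i q^i\bigr)$ is a quicker substitute for the paper's explicit formula $\pv\bigl(\frac{q^i}{1-q^\l}\bigr) = \frac{q^{(i-d \bmod \l)}}{1-q^\l}$.
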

	
	\begin{proof}
		Let $y \in \QQ \cap \ZZ_p$.
		Note that by Lemma \ref{lem-pinf-pv-on-Zp}, we may assume $y = q^i/(1-q^\l)$ for some $\l \in \NN$ and $0 \leq i < \l$.
		In this case, one computes that
		$$
		\pinf\left(\frac{q^i}{1-q^\l}\right)
		= \frac{iq^i}{1-q^\l} + \frac{\l q^{\l+i}}{(1-q^\l)^2}
		\in \QQ \cap \ZZ_p
		$$
		and
		$$
		\pv\left(\frac{q^i}{1-q^\l}\right)
		= \frac{q^{(i-d \bmod{\l})}}{1-q^\l}
		\in \QQ \cap \ZZ_p
		$$
		where $0 \leq (i-d \bmod{\l}) < \l$ is the unique integer congruent to $i-d$ modulo $\l$.
	\end{proof}

	\subsection{Monomial relations of normalized gamma values}   \label{subsec-monomial-relations-of-normalized-gamma-values}
		
	We now derive the standard monomial relations satisfied by the normalized gamma values (recall Definitions \ref{defn-biaf} and \ref{defn-bvaf}).
	Notably, the discrepancy between their relations is reconciled precisely by the Gauss sums map in Definition \ref{defn-ari-gauss-sum-map}.
	We will also show in \S\ref{subsec-transcendence-of-normalized-gamma-values} that in both the $\infty$-adic and $v$-adic cases, the algebraic relations among normalized gamma values are generated by these monomial relations.
	
	Recall \S\ref{subsubsec-carlitz-factorials} that for $i \geq 0$, $D_i$ is the product of all monic polynomials in $A$ of degree $i$.
	Then it is a standard fact that
	$$
	D_0 = 1
	\wideand
	D_i = \prod_{j=0}^{i-1} \left(\T^{q^i} - \T^{q^j}\right),
	\quad i \geq 1.
	$$
	We put
	$$
	L_0 := 1
	\wideand
	L_i := \prod_{j=1}^{i} \left(\T^{q^j}-\T\right),
	\quad i \geq 1.
	$$
	For convenience, we set $D_i = L_i = 1$ for $i < 0$.
	Then we have
	\begin{equation}      \label{eq-Dn-Ln}
		\frac{D_n}{\prod_{i=0}^{n-1} D_i^{q-1}}
		= L_n
	\end{equation}
	for all $n \in \ZZ$.
	All these properties can be found in \cite[Section 3]{goss1996basic}.
	
	For $y \in \ZZ_p$, we write $y = \sum_{i=0}^{\infty} y_iq^i$ with $0 \leq y_i < q$ for all $i$.
	Note that the function $\fia: \ZZ_p \to k_\infty^\times$ given by
	\begin{equation}    \label{eq-fia}
		\fia(y)
		:= \frac{\iaf(y)}{\iaf(\pv(y))}
		= \prod_{i=0}^{\infty} \left( \frac{\ovl{D}_i}{\ovl{D}_{i-d}} \right)^{y_i}
	\end{equation}
	fits into the assumption of Theorem \ref{thm-general-fe}.
	Also, for each fixed $\l \in \NN$, we put
	$$
	\fG(y) := \prod_{i=0}^{d\l-1} \left(g_i^{1/\l}\right)^{y_i}.
	$$
	Then for $y \in (q^{d\l}-1)^{-1}\ZZ$, we have
	\begin{equation}      \label{eq-ags-fG}
		\ags(y) = \fG(-\ang{-y}).
	\end{equation}
	Thus, when restricting to $(q^{d\l}-1)^{-1}\ZZ$, the function $\ags: \QQ \cap \ZZ_p \to \ovl{k}^\times$ also fits into the assumption of Theorem \ref{thm-general-fe}.
	We remark that the value $\fG(\ang{y}-1)$ does not depend on the choice of $\l$ for each fixed $y\in \QQ \cap \ZZ_p$.
	
	\begin{prop}[Translation relation]
		\label{prop-trans}
		For any $y \in \QQ \cap \ZZ_p$ with $\ord_q(y) = n$, we have
		\begin{enumerate}
			\item 
			$$
			\frac{\biaf(y)}{\biaf(y-1)}
			= \frac{L_n}{L_{n-d}}.
			$$
			
			\item
			$$
			\frac{\bvaf(y)}{\bvaf(y-1)}
			= \frac{L_n}{L_{n-d}}.
			$$
			
			\item
			$$
			\frac{\ags(y)}{\ags(y-1)} = 1.
			$$
		\end{enumerate}
	\end{prop}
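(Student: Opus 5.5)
The plan is to reduce each of the three identities to the general translation formula of Theorem \ref{thm-general-fe}(1), applied to the digit-product functions $\fia$ of \eqref{eq-fia}, $\vaf$, and the relevant power of $\T$, $\lambda$ and $-1$. The correction factors are then tracked using Lemma \ref{lem-pinf-pv-on-Zp}(1) with $a = y-1$ and $b = 1$. Since $\ord_q(y) = n$, the $q$-adic digits of $y-1$ in positions $0,\dots,n-1$ all equal $q-1$, so the carries are $\delta_i = 1$ for $i<n$ and $\delta_i = 0$ for $i \ge n$. Lemma \ref{lem-pinf-pv-on-Zp}(1) then gives
\[
\pinf(y) - \pinf(y-1) = q\,\frac{q^n-1}{q-1} = \deg L_n,
\qquad
\pv(y) - \pv(y-1) = \delta_{d-1} = \epsilon,
\]
where $\epsilon := 1$ if $n \ge d$ and $\epsilon := 0$ if $n<d$. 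Here we use $\pinf(1) = \pv(1) = 0$, valid since $d \ge 1$.

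For (1), I would write $\biaf(y) = \fia(y)\,\T^{\pinf(y) - \pinf\pv(y)}$. Theorem \ref{thm-general-fe}(1) applied to $\fia$ (digit factors $\ovl{D}_i/\ovl{D}_{i-d}$), combined with \eqref{eq-Dn-Ln}, gives
\[
\frac{\fia(y)}{\fia(y-1)} = \frac{L_n\,\T^{-\deg L_n}}{L_{n-d}\,\T^{-\deg L_{n-d}}}.
\]
The factor $\T^{\deg L_n}$ is supplied by the $\pinf$-difference computed above. For the $\pinf\pv$-difference, there are two cases. If $n<d$, then $\pv(y) = \pv(y-1)$, the difference is $0$, and $L_{n-d} = 1$. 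If $n \ge d$, then $\pv(y-1) = \pv(y) - 1$ and $\ord_q(\pv(y)) = n-d$, so the same carry computation applied to $\pv(y)$ gives the difference $\deg L_{n-d}$. In both cases the powers of $\T$ cancel exactly, leaving $L_n/L_{n-d}$.

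For (2), Theorem \ref{thm-general-fe}(1) with $f_i = -\td{D}_i$ gives
\[
\frac{\vaf(y)}{\vaf(y-1)} = -\frac{\td{D}_n}{\prod_{i<n}\td{D}_i^{q-1}},
\]
because $(-1)^{q-1} = 1$ in characteristic $p$. I would insert \eqref{eq-Di-tilde} and \eqref{eq-Dn-Ln}. When $n \ge d$, the $v$-exponent is $-q^{n-d} + \sum_{d \le i<n}(q-1)q^{i-d} = -1$, so the ratio becomes $-v^{-1}L_n/L_{n-d}$. When $n<d$, it is simply $-L_n$. The normalizing factors contribute $\lambda^{(q^d-1)\epsilon} = (-v)^{\epsilon}$ and $(-1)^{1 - q^d\epsilon}$, since $y - q^d\pv(y)$ minus the corresponding quantity for $y-1$ equals $1 - q^d\epsilon$. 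Multiplying these out gives $L_n/L_{n-d}$ in both cases; note that $1-q^d$ is even for odd $q$, and signs are irrelevant for even $q$.

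For (3), Definition \ref{defn-ari-gauss-sum-map} shows that $\ags(y)$ depends only on $\ang{-y}$, and $\ang{-(y-1)} = \ang{-y}$, so the ratio is $1$ immediately. The only delicate point in the whole argument is the $v$-power and sign bookkeeping in (2), specifically checking the exponent $-1$ of $v$ and matching it against $\lambda^{q^d-1} = -v$. Everything else is formal carry arithmetic.
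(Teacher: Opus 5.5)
Your proposal is correct and follows the paper's route: split off the digit-product part ($\fia$ or $\vaf$), apply the translation formula of Theorem \ref{thm-general-fe}(1) together with \eqref{eq-Di-tilde} and \eqref{eq-Dn-Ln}, and control the normalizing powers of $\T$, $\lambda$ and $-1$ through Lemma \ref{lem-pinf-pv-on-Zp}. Where the paper leaves details to the reader, you spell out the carry pattern of $(y-1)+1$, the identity $\ord_q(\pv(y)) = n-d$ used for the $\pinf\pv$-difference, and the sign bookkeeping with $\lambda^{q^d-1} = -v$.
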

	
	\begin{proof}
		(1) By Definition \ref{defn-biaf},
		$$
		\frac{\biaf(y)}{\biaf(y-1)}
		= \frac{\fia(y)}{\fia(y-1)}
		\cdot \frac{\T^{\pinf(y)-\pinf\pv(y)}}{\T^{\pinf(y-1)-\pinf\pv(y-1)}}.
		$$
		By \eqref{eq-fia}, Theorem \ref{thm-general-fe}, and \eqref{eq-Dn-Ln}, we have
		$$
		\frac{\fia(y)}{\fia(y-1)}
		= \left( \frac{\ovl{D}_n}{\ovl{D}_{n-d}} \right) \Bigg/
		\prod_{i=0}^{n-1} \left( \frac{\ovl{D}_i}{\ovl{D}_{i-d}} \right)^{q-1}
		= \frac{\ovl{L}_n}{\ovl{L}_{n-d}}.
		$$		
		On the other hand, by Lemma \ref{lem-pinf-pv-on-Zp}, one computes that
		$$
		\pinf(y)-\pinf(y-1)
		= \frac{q^{n+1}-q}{q-1}
		= \deg L_n
		$$
		and
		\begin{align*}
			\pinf\pv(y)-\pinf\pv(y-1)
			&=
			\begin{dcases}
				0, & n < d, \\
				\frac{q^{n-d+1}-q}{q-1}, & n \geq d,
			\end{dcases}   \\
			&= \deg L_{n-d}.
		\end{align*}
		This proves (1).
		
		(2) By Definition \ref{defn-bvaf},
		$$
		\frac{\bvaf(y)}{\bvaf(y-1)}
		= \frac{\vaf(y)}{\vaf(y-1)} \cdot
		\frac{\lambda^{(q^d-1)\pv(y)}}{\lambda^{(q^d-1)\pv(y-1)}} \cdot
		\frac{(-1)^{y-q^d\pv(y)}}{(-1)^{y-1-q^d\pv(y-1)}}.
		$$
		Using \eqref{eq-vaf}, Theorem \ref{thm-general-fe}, \eqref{eq-Di-tilde}, and \eqref{eq-Dn-Ln}, we have
		$$
		\frac{\vaf(y)}{\vaf(y-1)}
		= \frac{-\td{D}_n}{\prod_{i=0}^{n-1} \td{D}_i^{q-1}}
		= -\frac{L_n}{L_{n-d}} \cdot \frac{1}{v^{\kappa}}
		$$
		where $\kappa = 0$ if $n<d$ and $1$ otherwise.
		On the other hand, by Lemma \ref{lem-pinf-pv-on-Zp}, one has
		$$
		\pv(y)-\pv(y-1) = \kappa.
		$$
		This proves (2).
		
		(3) This follows immediately from Definition \ref{defn-ari-gauss-sum-map} as the map $\ags$ depends only on $y$ modulo $\ZZ$.
	\end{proof}
	
	\begin{prop}[Strong multiplication relation]   \label{prop-strong-multi}
		For any $\l \in \NN$ and $0 \leq y_i < q$ for all $i$, we have
		\begin{enumerate}
			\item
			$$
			\biaf\left( \sum_{i=0}^{\l-1} \frac{y_iq^i}{1-q^\l} \right)
			= \prod_{i=0}^{\l-1} \biaf\left(\frac{q^i}{1-q^\l}\right)^{y_i}.
			$$
			
			\item
			$$
			\bvaf \left( \sum_{i=0}^{\l-1} \frac{y_iq^i}{1-q^\l}\right)
			= \prod_{i=0}^{\l-1} \bvaf \left(\frac{q^i}{1-q^\l}\right)^{y_i}.
			$$
			
			\item
			$$
			\ags \left( \sum_{i=0}^{\l-1} \frac{y_iq^i}{1-q^\l}\right)
			= \prod_{i=0}^{\l-1} \ags \left(\frac{q^i}{1-q^\l}\right)^{y_i}.
			$$
		\end{enumerate}
	\end{prop}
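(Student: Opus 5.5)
The plan is to reduce each of the three identities to Theorem \ref{thm-general-fe}(2), applied to a function that is a product of digits, and then to deal separately with the normalizing factors that are not of this form. Throughout, put $y := \sum_{i=0}^{\l-1} y_iq^i/(1-q^\l)$. Its $q$-adic expansion is periodic, with digits $y_{i \bmod \l}$, so the sum defining $y$ has no carries in base $q$. Consequently $y$ is a sum of the terms $q^i/(1-q^\l)$, each taken with multiplicity $y_i$, and in this decomposition no addition produces a carry.

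For (1), write $\biaf(y) = \fia(y)\,\T^{\pinf(y)-\pinf\pv(y)}$. By \eqref{eq-fia}, $\fia$ is a digit-product function, so Theorem \ref{thm-general-fe}(2) gives $\fia(y)=\prod_i \fia(q^i/(1-q^\l))^{y_i}$. For the $\T$-power, Lemma \ref{lem-pinf-pv-on-Zp}(1) in the carry-free case shows that $\pinf$ is additive on this decomposition of $y$. The map $\pv$ only shifts digits, and by Lemma \ref{lem-pinf-pv-invariant} it sends $q^i/(1-q^\l)$ to $q^{(i-d \bmod \l)}/(1-q^\l)$. Hence $\pv(y)=\sum_i y_i\,\pv(q^i/(1-q^\l))$, and this decomposition is again carry-free, with digits $y_{(i+d)\bmod \l}$. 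Applying Lemma \ref{lem-pinf-pv-on-Zp}(1) once more gives $\pinf\pv(y)=\sum_i y_i \pinf\pv(q^i/(1-q^\l))$. Since the system $\{\T^r\}$ is compatible, the exponents add, and this proves (1).

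For (2), the same argument handles $\vaf$, which is a digit product by \eqref{eq-vaf}, and also the factor $\lambda^{(q^d-1)\pv(y)}$, by compatibility of $\{\lambda^r\}$ and additivity of $\pv$. The sign factor is the step that needs care. The exponent $y-q^d\pv(y)=\sum_{i<d} (\text{digit}_i)\,q^i$ is additive under carry-free sums, because digits add. So the integer exponents of $-1$ add, and $(-1)^{\cdot}$ is multiplicative on them. One must make sure that the sign is read off from integers, not rationals. The note after Definition \ref{defn-bvaf} guarantees this, since for $y$ and for each $q^i/(1-q^\l)$ the quantity $y-q^d\pv(y)$ is an integer.

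For (3), we need $\ags(y)$, which is defined through $\ang{-y}$. Choose $m$ with $\l \mid d m$ and $d\mid dm$; concretely, work in $(q^{dm}-1)^{-1}\ZZ$ with $m=\l$. Then $-y=\sum y_iq^i/(q^\l-1)$. If $y\neq 0$ and not all digits equal $q-1$, then $-y\in(0,1)$, so $\ang{-y}=-y$. Rewriting over the denominator $q^{d\l}-1$, the digits of $\ang{-y}$ are the periodic extension $y_{s\bmod \l}$ for $0\le s<d\l$. Hence $\ags(y)=\prod_s g_s^{y_{s\bmod\l}/\l}$, which is additive in the digit vector $(y_i)$, and this gives (3). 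The degenerate cases need separate treatment. If all $y_i=0$, both sides are $1$. If all $y_i=q-1$, then $y=-1$, and $\ags(-1)=\ags(0)=1$; on the other side $\prod_i\ags(q^i/(1-q^\l))^{q-1}=\prod_s g_s^{(q-1)/\l}$, which need not equal $1$. So this case genuinely requires checking. I expect this boundary case, together with the consistency of the choice of roots $g_j^{1/\l}$ across different $\l$, to be the main obstacle. If the product $\prod_s g_s^{q-1}$ over a period is not $1$, the statement has to be read with $0\le y_i<q$ and the $y_i$ not all equal to $q-1$, or for $y\notin\ZZ$. I would check this using the known evaluation of $\prod_{j} g_j^{q-1}$ via the Gross–Koblitz formula \eqref{eq-gk-formula} at $y=-1$.
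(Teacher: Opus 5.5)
Your treatment of (1), (2), and of (3) away from the boundary follows the paper's argument. The paper notes that $\fia$, $\vaf$ and $\ags$ are digit-product functions to which Theorem \ref{thm-general-fe}(2) applies, and that $\pinf$, $\pv$, $\pinf\pv$ are additive on the carry-free decomposition of $y$. Your remark on the sign exponent $y-q^d\pv(y)$ supplies a step the paper leaves implicit. The one place you stop short is the case $y_0=\cdots=y_{\l-1}=q-1$. There your suspicion is right: (3) is false in that case, and the paper's proof overlooks it.

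You can settle it as follows. Here $y=-1$, and $\ags(-1)=1$ because $\ang{1}=0$. Assume $(q,\l)\neq(2,1)$; when $(q,\l)=(2,1)$ the identity is the tautology $\ags(-1)=\ags(-1)$. Then $q^i/(q^\l-1)<1$, so every factor on the right can be computed with denominator $q^{d\l}-1$, and the right side is $\prod_{s=0}^{d\l-1}(g_s^{1/\l})^{q-1}=\prod_{j=0}^{d-1}g_j^{q-1}$. The decomposition $-1=\sum_{j=0}^{d-1}(q-1)q^j/(1-q^d)$ is carry-free and $\vaf$ is a genuine digit-product function, so \eqref{eq-gk-formula} gives
\[
\prod_{j=0}^{d-1} g_j^{q-1}
=\prod_{j=0}^{d-1}\left(\frac{-\lambda^{q^j}}{\vaf\bigl(q^j/(1-q^d)\bigr)}\right)^{q-1}
=\frac{\lambda^{q^d-1}}{\vaf(-1)}
=\frac{-v}{\vaf(-1)},
\]
using $(-1)^{(q-1)d}=1$ in characteristic $p$. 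Every $\td D_i$ is prime to $v$, so $\vaf(-1)$ is a $v$-adic unit and this product has $\ord_v$ equal to $1$. In particular it is never $1$, so you should drop "need not equal $1$". A concrete instance is $q=2$, $v=\T$, $\l=2$, $y_0=y_1=1$. The left side is $1$, while the right side is $g_0=-\psi(1)=\T$, since $\psi(1)$ is the nonzero root of $\T x+x^2$ in characteristic $2$.

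The paper's error comes from \eqref{eq-ags-fG}. It identifies $\ags$ with the digit-product function $\fG$ only on $(-1,0]$, whereas $\sum_i y_iq^i/(1-q^\l)$ ranges over $[-1,0]$. So (3) has to be stated for digit vectors not all equal to $q-1$. The relations imposed in \eqref{eq-strong-multi-symbol} and in $I_\l$ use only digits of $\ang{-y}\in[0,1)$, which are never all $q-1$, so the restricted form is what is needed there. Parts (1) and (2) hold without restriction, exactly as you argue.

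Your other worry, about compatibility of the roots $g_j^{1/\l}$ across different $\l$, does not arise, since you compute every value with the single denominator $q^{d\l}-1$.

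As a side remark, the same boundary effect reappears in the paper's final translation step from $0$ to $-1$. There $\biaf(-1)=1$ because $\pv(-1)=-1$, whereas $\bvaf(-1)\ags(-1)=-\vaf(-1)/v\neq 1$.
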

	
	\begin{proof}
		Recall \eqref{eq-fia}, \eqref{eq-vaf}, and \eqref{eq-ags-fG} respectively that the functions
		$$
		\fia(y),
		\quad
		\vaf(y),
		\wideand
		\ags(y)
		\quad
		\left(y \in \frac{1}{q^{d\l}-1} \ZZ\right)
		$$
		all fit into the assumption of Theorem \ref{thm-general-fe}.
		Thus, one easily deduces their strong multiplication relations.
		On the other hand, by Lemma \ref{lem-pinf-pv-on-Zp}, we have
		$$
		\partial_{\bullet} \left( \sum_{i=0}^{\l-1} \frac{y_iq^i}{1-q^\l} \right)
		= \sum_{i=0}^{\l-1} y_i \partial_{\bullet} \left( \frac{q^i}{1-q^\l} \right) 
		$$
		for each $\bullet \in \{\infty, v\}$
		and
		$$
		\pinf \pv \left( \sum_{i=0}^{\l-1} \frac{y_iq^i}{1-q^\l} \right)
		= \pinf\left( \sum_{i=0}^{\l-1} y_i \pv \left(\frac{q^i}{1-q^\l} \right)\right)
		= \sum_{i=0}^{\l-1} y_i \pinf \pv \left( \frac{q^i}{1-q^\l} \right) 
		$$
		as the sums have no carry over base $q$.
		The results follow.
	\end{proof}
	
	\begin{prop}[Gross-Koblitz relation]   \label{prop-gk}
		For any $\l \in \NN$ and $y \in (q^{d\l}-1)^{-1}\ZZ$, we have
		\begin{enumerate}
			\item
			$$
			\prod_{i=0}^{\l-1} \biaf\left(-\ang{q^{di}y}\right) = 1.
			$$
			
			\item
			$$
			\prod_{i=0}^{\l-1} \bvaf\ags\left(-\ang{q^{di}y}\right) = 1.
			$$
		\end{enumerate}
	\end{prop}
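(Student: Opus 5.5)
The plan is to reduce both identities to the case $\l = 1$ via Proposition \ref{prop-strong-gross-koblitz} with $n = d$, and then check the base values directly.

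\textbf{Reduction.} By Proposition \ref{prop-strong-multi}, each of $\biaf$, $\bvaf$ and $\ags$ satisfies the strong multiplication formula \eqref{eq-strong-multi}. So does the pointwise product $\bvaf\ags$, since both sides of \eqref{eq-strong-multi} are multiplicative in $f$. Apply Proposition \ref{prop-strong-gross-koblitz} with $n = d$ to $f = \biaf$ and to $f = \bvaf\ags$. For $y \in (q^{d\l}-1)^{-1}\ZZ$ this gives
$$
\prod_{i=0}^{\l-1} f\left(-\ang{q^{di}y}\right) = \prod_{j=0}^{d-1} f\left(\frac{q^j}{1-q^d}\right)^{e_j}.
$$
It therefore suffices to show that $f(q^j/(1-q^d)) = 1$ for every $0 \leq j < d$, for both choices of $f$.

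\textbf{The $\infty$-adic case.} Put $y = q^j/(1-q^d)$. Its base-$q$ digits are periodic with period $d$, so shifting them by $d$ gives back the same number; explicitly, the formula in the proof of Lemma \ref{lem-pinf-pv-invariant} gives $\pv(y) = q^{(j-d \bmod d)}/(1-q^d) = y$. Definition \ref{defn-biaf} then yields
$$
\biaf(y) = \frac{\iaf(y)\T^{\pinf(y)}}{\iaf(y)\T^{\pinf(y)}} = 1,
$$
which proves (1).

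\textbf{The $v$-adic case.} Again $\pv(y) = y$, so $(q^d-1)\pv(y) = -q^j$ and $y - q^d\pv(y) = q^j$. Definition \ref{defn-bvaf} together with Thakur's formula \eqref{eq-gk-formula} gives
$$
\bvaf(y) = \vaf(y)\,\lambda^{-q^j}(-1)^{q^j} = -\frac{\lambda^{q^j}}{g_j}\,\lambda^{-q^j}(-1)^{q^j} = \frac{-(-1)^{q^j}}{g_j} = \frac{1}{g_j}.
$$
The last equality holds because $(-1)^{q^j} = -1$ when $q$ is odd, while all signs are trivial in characteristic $2$. On the other side, $\ang{-y} = q^j/(q^d-1)$. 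In Definition \ref{defn-ari-gauss-sum-map} with $\l = 1$, its digits are $y_j = 1$ and $y_i = 0$ for $i \neq j$, so $\ags(y) = g_j$. Hence $\bvaf\ags(y) = 1$, which proves (2).

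\textbf{Main point of care.} The step needing attention is whether Proposition \ref{prop-strong-gross-koblitz} legitimately applies to $\ags$, which is built from chosen roots $g_i^{1/\l}$. This is handled by Proposition \ref{prop-strong-multi}(3) together with the fact that $\ags$ does not depend on $\l$, since the compatible systems of roots are chosen coherently. With that granted, the sign bookkeeping above is the only remaining check.
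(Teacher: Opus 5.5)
Your route is the paper's proof step for step. You reduce via Proposition \ref{prop-strong-gross-koblitz} with $n=d$, use $\pv$-invariance of $q^j/(1-q^d)$ for (1), and use Thakur's formula \eqref{eq-gk-formula} together with $\ags(q^j/(1-q^d))=g_j$ for (2). Part (1) is fine as written.

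Part (2), however, has a genuine gap in the single case $q=2$, $d=1$, and the paper's proof shares it.
\begin{itemize}
\item \textbf{The base value is wrong.} Here $q^0/(1-q^d)=-1$, so $\ang{-y}=\ang{1}=0$, not $q^j/(q^d-1)=1$. Definition \ref{defn-ari-gauss-sum-map} then gives $\ags(-1)=\ags(0)=1$. On the other hand $\bvaf(-1)=1/g_0=1/v$, since for $d=1$ one has $g_0=\psi(1)$, and here $\psi(1)=\lambda=-v=v$. So your claimed base value $\bvaf\ags(q^j/(1-q^d))=1$ is false.
\item \textbf{The reduction identity also fails.} It breaks exactly at the ``point of care'' you flagged. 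The function $\ags(y)=\fG(-\ang{-y})$ agrees with the digit-product function $\fG$ on $(q^{d\l}-1)^{-1}\ZZ\cap(-1,0]$, but not at $-1$. Hence Proposition \ref{prop-strong-multi}(3) breaks down at the argument $-1$, where all digits equal $q-1$. For instance, with $q=2$, $d=1$, $\l\ge 2$, one gets $\prod_{i=0}^{\l-1}\ags(\bfy_i)=g_0=v$, whereas $\ags(-1)=1$.
\item \textbf{Where this enters your argument.} This failing instance is precisely the second step in the proof of Proposition \ref{prop-strong-gross-koblitz} when $q=2$, $n=1$. Concretely, take $\l=2$ and $y=1/3$. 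The left side of your displayed identity is $1$, while the right side is $(\bvaf\ags)(-1)^{e_0}=1/v$.
\end{itemize}
So in this case your chain reaches the true conclusion only through two false equalities.

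The statement itself survives; the repair is to stop the reduction one step earlier.

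If $q^{d\l}=2$, then $\l=1$ and $y\in\ZZ$, so the product is $\bvaf\ags(0)=1$.

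If $q^{d\l}>2$, then every $\bfy_s=q^s/(1-q^{d\l})$ and every $-\ang{q^{di}y}$ lies in $(-1,0]$, where $\ags=\fG$. So the first step of the proof of Proposition \ref{prop-strong-gross-koblitz} is valid for $f=\bvaf\ags$, and it gives
\[
\prod_{i=0}^{\l-1} f\bigl(-\ang{q^{di}y}\bigr)=\prod_{j=0}^{d-1}\Bigl(\prod_{i=0}^{\l-1} f(\bfy_{di+j})\Bigr)^{e_j}.
\]
Now check the inner products directly.
\begin{itemize}
\item $\prod_{i}\bvaf(\bfy_{di+j})=\bvaf(q^j/(1-q^d))=1/g_j$. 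This holds because $\bvaf$ satisfies strong multiplication for every digit string, including all digits $q-1$.
\item $\prod_{i}\ags(\bfy_{di+j})=\prod_{i}g_{di+j}^{1/\l}=g_j$, straight from Definition \ref{defn-ari-gauss-sum-map}.
\end{itemize}
The two inner products cancel, so each factor equals $1$ and the identity follows.
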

	
	\begin{proof}
		(1) As
		$$
		\pv\left(\frac{q^j}{1-q^d}\right)
		= \frac{q^j}{1-q^d},
		$$
		one immediately sees from Definition \ref{defn-biaf} that
		$$
		\biaf\left(\frac{q^j}{1-q^d}\right) = 1
		$$
		for each $0 \leq j < d$.
		The result now follows from Propositions \ref{prop-strong-multi} and \ref{prop-strong-gross-koblitz}.
		
		(2) Similar to (1), it suffices to show that
		$$
		\bvaf \ags \left(\frac{q^j}{1-q^d}\right) = 1
		$$
		for each $0 \leq j < d$.
		By Definition \ref{defn-bvaf} and Thakur's Gross-Koblitz formula \eqref{eq-gk-formula}, we have
		$$
		\bvaf\left(\frac{q^j}{1-q^d}\right)
		= -\frac{\lambda^{q^j}}{g_j} \frac{1}{\lambda^{q^j}} (-1)^{q^j}
		= \frac{1}{g_j}.
		$$
		On the other hand, by Definition \ref{defn-ari-gauss-sum-map},
		$$
		\ags \left(\frac{q^j}{1-q^d}\right) = g_j.
		$$
		This completes the proof.
	\end{proof}
	
	\subsection{Transcendence of normalized gamma values}      \label{subsec-transcendence-of-normalized-gamma-values}
	
	Recall that our goal is to prove Theorem \ref{thm-main}.
	Note that by replacing $\l \mapsto d\l$, it is equivalent to show that for each $\l \in \NN$, we have a $\ovl{k}$-algebra isomorphism
	\begin{equation}      \label{eq-main-thm-dl}
		\bfG:
		\ovl{k} \left[\biaf(y) \Bigm| y \in \frac{1}{q^{d\l}-1} \ZZ \right]
		\isoto
		\ovl{k} \left[\bvaf(y) \Bigm| y \in \frac{1}{q^{d\l}-1} \ZZ \right]
	\end{equation}
	given by $\bfG(\biaf(y)) := \bvaf(y) \ags(y)$.
	Our next step is to use the “distribution relations” to determine the transcendence of these normalized gamma values and find concrete transcendence bases.
	
	\subsubsection{Distribution relations}       \label{subsubsec-distribution-relations}
	
	The following notations are adopted from \cite[\S 4.3.1]{wei2026algebraic} with slight adjustments.
	Let $\Aari$ be the free abelian group generated by all the elements in $(q^{d\l}-1)^{-1}\ZZ/\ZZ$.
	Then every element in $\Aari$ is written as
	$$
	\sum_{y \in (q^{d\l}-1)^{-1}\ZZ/\ZZ} n_y [y]
	\widewhere
	n_y \in \ZZ.
	$$
	We let $\Rari$ be the subgroup of $\Aari$ generated by the strong multiplication relation
	\begin{equation}        \label{eq-strong-multi-symbol}
		[y] - \sum_{i=0}^{d\l-1} y_i \left[\frac{q^i}{1-q^{d\l}}\right]
	\end{equation}
	for all $y \in (q^{d\l}-1)^{-1}\ZZ/\ZZ$ with $\ang{-y} = \sum_{i=0}^{d\l-1} y_iq^i/(q^{d\l}-1)$ where $0 \leq y_i < q$ for all $i$, and the Gross-Koblitz relation
	\begin{equation}       \label{eq-gk-symbol}
		\sum_{i=0}^{\l-1} \left[-\ang{q^{di}y}\right]
		\wideforall
		y \in \frac{1}{q^{d\l}-1}\ZZ/\ZZ.
	\end{equation}
	We put $\Uari := \Aari / \Rari$.
	
	For $0 \leq i < d\l$, let
	$$
	\bfy_i := \frac{q^i}{1-q^{d\l}}.
	$$
	We let $\Fari$ be the quotient of $\Aari$ by the subgroup generated by the strong multiplication relation \eqref{eq-strong-multi-symbol}.
	Then by \cite[\S4.3.1]{wei2026algebraic}, $\Fari$ is a free abelian group of rank $d\l$ with basis
	$$
	\left\{ [\bfy_i] \mid 0 \leq i < d\l \right\}.
	$$
	
	\begin{rem}
		In fact, Wei showed that $\FariQ := \QQ \otimes_\ZZ \Fari$ is a $\QQ$-vector space of dimension $d\l$ with basis $\{[\bfy_i] \mid 0 \leq i < d\l\}$, which is called the \textit{universal distribution associated to the arithmetic diamond bracket relations}.
		We mention that to deal with the transcendence problem of special gamma values, it is enough to compute the dimension of the distribution relations.
		Here, since we are considering the algebras generated by special gamma values, a careful analysis of the group structures is necessary.
	\end{rem}
	
	\subsubsection{The \texorpdfstring{$\infty$}{infty}-adic case}
	
	For the $\infty$-adic case, we let $\Vari \sbe \Fari$ be the subgroup generated by
	$$
	\left\{[y] - [\pv(y)] \Bigm| y \in \frac{1}{q^{d\l}-1}\ZZ/\ZZ\right\},
	$$
	which is free of rank $\leq d\l$.
	
	\begin{prop}     \label{prop-rank-basis-of-V}
		$\Vari$ is a free abelian group of rank $d\l-d$ with basis
		$$
		\left\{ [\bfy_i] - [\pv(\bfy_i)] \mid d \leq i < d\l \right\}.
		$$
	\end{prop}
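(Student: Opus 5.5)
The plan is to make $\Vari$ completely explicit inside the free group $\Fari=\bigoplus_{0\le i<d\l}\ZZ[\bfy_i]$, by identifying it with the image of $1-\sigma$ for a permutation $\sigma$ of the basis. We then read off the rank and a basis cycle by cycle.

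First I would compute $\pv$ on the generators. For $0\le i<d\l$, the $q$-adic digits of $\bfy_i=q^i/(1-q^{d\l})$ are periodic of period $d\l$, with a single digit $1$ in positions $\equiv i \pmod{d\l}$. Hence, as in the proof of Lemma \ref{lem-pinf-pv-invariant}, $\pv(\bfy_i)=\bfy_{\sigma(i)}$, where $\sigma(i):=(i-d \bmod d\l)$. Next take a general $y\in (q^{d\l}-1)^{-1}\ZZ/\ZZ$ and write $\ang{-y}=\sum_{i<d\l} y_iq^i/(q^{d\l}-1)$. The representative $-\ang{-y}=\sum_i y_i\bfy_i$ is a sum with no carry over base $q$. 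By Lemma \ref{lem-pinf-pv-on-Zp}(1), $\pv(-\ang{-y})=\sum_i y_i\bfy_{\sigma(i)}$, and this is again a carry-free digit expansion, of the same periodic shape. Using the strong multiplication relation \eqref{eq-strong-multi-symbol} in $\Fari$ for both $y$ and $\pv(y)$, we get
$$
[y]-[\pv(y)] = \sum_{i=0}^{d\l-1} y_i\bigl([\bfy_i]-[\bfy_{\sigma(i)}]\bigr).
$$
There is a small point here: $[\pv(y)]$ should be interpreted through the class of $\pv$ applied to the representative $-\ang{-y}$. I would check that this is consistent with the conventions, since $\pv$ is not obviously well defined modulo $\ZZ$. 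Taking $y=\bfy_i$ shows that each $[\bfy_i]-[\bfy_{\sigma(i)}]$ lies in $\Vari$. Hence $\Vari=(1-\sigma)\Fari$, where $\sigma$ also denotes the induced permutation of the basis.

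Now $\sigma$ decomposes $\{0,\dots,d\l-1\}$ into $d$ cycles $C_j=\{j,j+d,\dots,j+d(\l-1)\}$ for $0\le j<d$, each of length $\l$. The image of $1-\sigma$ is therefore the direct sum over $j$ of the sum-zero sublattices of $\bigoplus_{i\in C_j}\ZZ[\bfy_i]$. Each of these has rank $\l-1$, so the total rank is $d\l-d$.

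For the basis, note that within $C_j$ the elements $[\bfy_{j+dk}]-[\bfy_{j+d(k-1)}]$ for $1\le k\le \l-1$ are exactly the claimed generators with $d\le i<d\l$. They form the standard telescoping basis of the sum-zero lattice: they are unitriangular with respect to $[\bfy_{j+dk}]$, so they are linearly independent, and every sum-zero vector is an integral combination of them. The remaining generator $[\bfy_j]-[\bfy_{j+d(\l-1)}]$ of $C_j$ is minus the sum of these $\l-1$ elements, so it adds nothing new. Finally, a subgroup of a free abelian group is free, which settles freeness.

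I expect the only delicate step to be the first one. It amounts to justifying that $[y]-[\pv(y)]$ is $\ZZ$-linear in the digits of $\ang{-y}$, which needs the carry-free clause of Lemma \ref{lem-pinf-pv-on-Zp} and the well-definedness remark above. Everything after that is elementary lattice combinatorics.
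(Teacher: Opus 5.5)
Your proposal is correct and follows essentially the same route as the paper: reduce via the strong multiplication relation to the elements $[\bfy_i]-[\bfy_{\sigma(i)}]$, note that those with $0\le i<d$ are minus the telescoping sums $\sum_{j=1}^{\l-1}\bigl([\bfy_{i+jd}]-[\pv(\bfy_{i+jd})]\bigr)$, and get linear independence from the basis of $\Fari$; your cycle and sum-zero-lattice phrasing just makes the paper's ``one checks'' explicit. Your well-definedness worry is harmless, since Lemma \ref{lem-pinf-pv-on-Zp}(1) together with $\pv(\ZZ)\subseteq\ZZ$ shows that $\pv$ descends to $(q^{d\l}-1)^{-1}\ZZ/\ZZ$.
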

	
	\begin{proof}
		Since $\{ [\bfy_i] \mid 0 \leq i < d\l \}$ is a basis of $\Fari$, one checks that the set given by the proposition is linearly independent.
		To show that $\Vari$ is generated by this set, note that by the strong multiplication relation \eqref{eq-strong-multi-symbol}, it suffices to generate $[\bfy_i] - [\pv(\bfy_i)]$ for $0 \leq i < d$.
		In this case, one has
		$$
		[\bfy_i] - [\pv(\bfy_i)]
		= -\sum_{j=1}^{\l-1} \left( [\bfy_{i+jd}] - [\pv(\bfy_{i+jd})] \right).
		$$
		This completes the proof.
	\end{proof}
	
	By \cite[Theorem 3.3.5 and Corollary 4.3.3]{wei2026algebraic}, there exists an injective $\QQ$-linear homomorphism
	$$
	\widehat{\Pi}^{\textnormal{ari}}_\infty: \FariQ \lra \ovl{k}_\infty^\times / \ovl{k}^\times,
	\quad
	\widehat{\Pi}^{\textnormal{ari}}_\infty([y]) := \iaf(-\ang{-y}),
	$$
	called the \textit{arithmetic gamma distribution}.
	Since $\biaf(y)$ differs from $\iaf(y) / \iaf(\pv(y))$ by an algebraic multiple (recall Definition \ref{defn-biaf}), by Proposition \ref{prop-rank-basis-of-V}, we obtain the following transcendence result for the normalized $\infty$-adic gamma values.
	
	\begin{cor}      \label{cor-trans-degree-of-inf}
		We have
		$$
		\ovl{k} \left( \biaf(y) \Bigm| y \in \frac{1}{q^{d\l}-1} \ZZ \right)
		= \ovl{k} \left( \biaf(\bfy_i) \bigm| d \leq i < d\l\right),
		$$
		and the transcendence degree over $\ovl{k}$ is $d\l-d$.
		In particular, the special gamma values
		$$
		\left\{\biaf(\bfy_i) \bigm| d \leq i < d\l\right\}
		$$
		are algebraically independent.
	\end{cor}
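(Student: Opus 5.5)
We use the injective arithmetic gamma distribution $\widehat{\Pi}^{\textnormal{ari}}_\infty: \FariQ \to \ovl{k}_\infty^\times/\ovl{k}^\times$, $[y] \mapsto \iaf(-\ang{-y})$, together with Proposition \ref{prop-rank-basis-of-V}. Recall that an injective $\QQ$-linear map into $\ovl{k}_\infty^\times/\ovl{k}^\times$ means: elements whose classes are $\QQ$-linearly independent modulo $\ovl{k}^\times$ (i.e.\ multiplicatively independent modulo algebraic numbers, even after taking rational powers) are algebraically independent over $\ovl{k}$. This is the form in which \cite[Theorem 3.3.5]{wei2026algebraic} is stated.

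First I identify each $\biaf(y)$ modulo $\ovl{k}^\times$ with an element of $\FariQ$. By Definition \ref{defn-biaf}, $\biaf(y) \equiv \iaf(y)/\iaf(\pv(y))$ modulo $\ovl{k}^\times$, since the powers of $\T$ are algebraic. By Proposition \ref{prop-trans}(1), $\biaf(y)$ and $\biaf(y-1)$ differ by an element of $k^\times$, so I may replace $y$ by $-\ang{-y}$. By the explicit formula for $\pv$ in Lemma \ref{lem-pinf-pv-invariant}, together with additivity without carries (Lemma \ref{lem-pinf-pv-on-Zp}), $\pv(-\ang{-y})$ is again of the form $-\ang{-y'}$ and corresponds to $[\pv(y)]$ in $\Fari$. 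Hence the class of $\biaf(y)$ equals $\widehat{\Pi}^{\textnormal{ari}}_\infty([y]-[\pv(y)])$. Consequently, the subgroup of $\ovl{k}_\infty^\times/\ovl{k}^\times$ generated by all $\biaf(y)$ with $y \in (q^{d\l}-1)^{-1}\ZZ$ is the image of $\Vari$.

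Next, by Proposition \ref{prop-rank-basis-of-V}, every $[y]-[\pv(y)]$ is a $\ZZ$-combination of $[\bfy_i]-[\pv(\bfy_i)]$ for $d\le i<d\l$. So each $\biaf(y)$ equals a Laurent monomial in the $\biaf(\bfy_i)$, $d\le i<d\l$, times an element of $\ovl{k}^\times$. This gives the equality of fields. More concretely, I can obtain it directly from Propositions \ref{prop-trans}(1), \ref{prop-strong-multi}(1), and \ref{prop-gk}(1). Translation reduces to $y=-\ang{-y}$. Strong multiplication writes $\biaf(y)$ as a monomial in $\biaf(\bfy_i)$, $0\le i<d\l$. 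The Gross-Koblitz relation applied to $y=\bfy_i$ with $0\le i<d$, combined with strong multiplication, expresses $\biaf(\bfy_i)$ for $i<d$ through the others, mirroring the identity used in the proof of Proposition \ref{prop-rank-basis-of-V}.

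For algebraic independence, the elements $[\bfy_i]-[\pv(\bfy_i)]$ for $d\le i<d\l$ are $\ZZ$-linearly independent in the free group $\Fari$ with basis $\{[\bfy_i]\}$. They remain $\QQ$-linearly independent in $\FariQ$, because they form a basis of a free subgroup of a lattice. Injectivity of $\widehat{\Pi}^{\textnormal{ari}}_\infty$ then makes their images $\QQ$-linearly independent modulo $\ovl{k}^\times$. By the cited theorem, this yields algebraic independence, so the transcendence degree is $d\l-d$.

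The main obstacle is the bookkeeping step showing that $\pv$ on representatives is compatible with the symbol $[\pv(y)]$ in $\Fari$, including the sign convention $-\ang{-y}$ and the translation by integers. I will handle it by writing $-\ang{-y} = \sum y_i\bfy_i$ with digits periodic of period $d\l$; then $\pv$ simply shifts the digit sequence by $d$, which gives $\sum y_i\,\pv(\bfy_i)$ with no carries.
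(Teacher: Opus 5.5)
Your proposal is correct and is essentially the paper's argument. You identify the class of $\biaf(y)$ modulo $\ovl{k}^\times$ with $\widehat{\Pi}^{\textnormal{ari}}_\infty([y]-[\pv(y)])$ and read off both the equality of fields and the transcendence degree from the basis of $\Vari$ in Proposition~\ref{prop-rank-basis-of-V}; your reduction to the representatives $-\ang{-y}$ and the digit-shift description of $\pv$ supply details the paper leaves implicit.

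One caution: injectivity of $\widehat{\Pi}^{\textnormal{ari}}_\infty$ by itself only gives multiplicative independence modulo $\ovl{k}^\times$, so the step to algebraic independence really rests on the transcendence-degree content of the cited results. For instance, the $d\l$ values $\iaf(\bfy_i)$, $0 \leq i < d\l$, are algebraically independent by \cite{cpty2010algebraic}, and hence so are the monomials $\iaf(\bfy_i)/\iaf(\bfy_{i-d})$, $d \leq i < d\l$; the paper uses this implicitly as well.
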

	
	\subsubsection{The \texorpdfstring{$v$}{v}-adic case}           \label{subsubsec-the-v-adic-case}
	
	In view of Definition \ref{defn-bvaf}, the transcendence of the normalized $v$-adic gamma values has been fully settled in \cite{cwy2026vadic}.
	Below, we find a concrete transcendence base.
	
	Recall \S\ref{subsubsec-distribution-relations} that $\Uari$ is the the quotient of $\Aari$ by the subgroup $\Rari$ generated by the strong multiplication relation \eqref{eq-strong-multi-symbol} and the Gross-Koblitz relation \eqref{eq-gk-symbol}.
	
	\begin{prop}     \label{prop-rank-basis-of-u}
		$\Uari$ is a free abelian group of rank $d\l-d$ with basis
		$$
		\left\{ [\bfy_i] \mid d \leq i < d\l \right\}.
		$$
	\end{prop}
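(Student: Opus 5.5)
We pass through $\Fari$: since $\Rari$ contains the strong multiplication relations, $\Uari$ is the quotient of $\Fari$ by the image of the Gross-Koblitz relations \eqref{eq-gk-symbol}. By \cite[\S4.3.1]{wei2026algebraic}, $\Fari$ is free with basis $\{[\bfy_i] \mid 0 \leq i < d\l\}$. So the task is to compute the images of the Gross-Koblitz relations in $\Fari$, express them in this basis, and show that they span exactly the subgroup spanned by $[\bfy_0],\dots,[\bfy_{d-1}]$. Once this is done, $\Uari \cong \Fari/\langle [\bfy_j] \mid 0\le j<d\rangle$ is free with the stated basis.

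\emph{Step 1 (image of a Gross-Koblitz relation).} Fix $y \in (q^{d\l}-1)^{-1}\ZZ/\ZZ$ and write $\ang{y} = \sum_{s=0}^{d\l-1} y_s q^s/(q^{d\l}-1)$. For each $i$, the element $-\ang{q^{di}y}$ has base-$q$ expansion in which $\ang{q^{di}y}$ has the digits of $\ang{y}$ cyclically shifted by $di$ positions. The relation \eqref{eq-strong-multi-symbol} then gives
$$
[-\ang{q^{di}y}] = \sum_{s} y^{(i)}_s [\bfy_s]
$$
in $\Fari$, where the $y^{(i)}_s$ are these shifted digits. This is exactly the symbolic version of the computation in the proof of Proposition \ref{prop-strong-gross-koblitz}, carried out in $\Fari$ instead of through a function $f$. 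Summing over $i$, and using that the shifted digits at position $s$ sum to $e_j := \sum_{i=0}^{\l-1} y_{di+j}$ whenever $s \equiv j \pmod d$, we get
$$
\sum_{i=0}^{\l-1} [-\ang{q^{di}y}] = \sum_{j=0}^{d-1} e_j\, \Big(\sum_{m=0}^{\l-1} [\bfy_{j+dm}]\Big).
$$
Define $z_j := \sum_{m=0}^{\l-1} [\bfy_{j+dm}]$. By \eqref{eq-strong-multi-symbol}, $z_j$ is the class of $[q^j/(1-q^d)]$, since the digits of $q^j/(q^d-1)$ at level $d\l$ are $1$ exactly at the positions $j+dm$. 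Hence the Gross-Koblitz subgroup in $\Fari$ is contained in the span of $z_0,\dots,z_{d-1}$.

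\emph{Step 2 (every $z_j$ is attained).} Take $y = -q^j/(1-q^d) = q^j/(q^d-1)$. Its digits give $e_{j'} = \delta_{jj'}$, so by Step 1 the relation for this $y$ is exactly $z_j$. The Gross-Koblitz subgroup is therefore precisely $\langle z_0,\dots,z_{d-1}\rangle$.

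\emph{Step 3 (change of basis).} In the basis of $\Fari$, each $z_j$ equals $[\bfy_j]$ plus a combination of $[\bfy_i]$ with $i \ge d$. The unitriangular substitution $[\bfy_j] \mapsto z_j$ for $j<d$, keeping $[\bfy_i]$ for $i\ge d$, therefore produces another $\ZZ$-basis of $\Fari$. Quotienting by $\langle z_0,\dots,z_{d-1}\rangle$ leaves a free group with basis $\{[\bfy_i] \mid d\le i<d\l\}$, of rank $d\l-d$.

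The main obstacle is the digit bookkeeping in Step 1. One has to be careful with the convention in \eqref{eq-strong-multi-symbol}, which uses digits of $\ang{-y}$ to express $[y]$, and check that the element $-\ang{q^{di}y}$ is handled with the right sign and the right cyclic shift. The edge case $y=0$ needs separate treatment: there the relation is $\l[0]$, and $[0]$ is $0$ in $\Fari$, so it contributes nothing. The remaining steps are formal.
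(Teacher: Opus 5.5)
Your Step 2 contains an error, and it is exactly where integrality is decided. For $y=q^j/(q^d-1)$, the level-$d\l$ expansion of $\ang{y}$ has digit $1$ at \emph{every} position $j+dm$, $0\le m<\l$ (as you note yourself at the end of Step 1). So $e_{j'}=\sum_{i=0}^{\l-1}y_{di+j'}=\l\,\delta_{jj'}$, not $\delta_{jj'}$. Equivalently, since $q^{di}\equiv 1\pmod{q^d-1}$, all $\l$ classes $-\ang{q^{di}y}$ coincide, and the Gross--Koblitz relation for this $y$ is $\l z_j$. (For $q=2$, $d=1$ it is worse: this $y$ is an integer, its relation is $\l[0]=0$, and the identification of $z_0$ with $[q^0/(1-q)]=[0]$ fails.)

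So Step 2 only produces the elements $\l z_j$. Nothing in Steps 1--2 excludes that the image $H$ of the Gross--Koblitz relations in $\Fari$ is just $\langle \l z_0,\dots,\l z_{d-1}\rangle$. By your own change of basis, that would give $\Fari/H\cong\ZZ^{d\l-d}\oplus(\ZZ/\l\ZZ)^d$. The rank count survives, but freeness with the stated basis, which is the actual content of the proposition, is not established for $\l\ge 2$.

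The repair is to take $y=q^j/(q^{d\l}-1)$ with $0\le j<d$ instead. Its level-$d\l$ expansion has a single digit $1$, in position $j$, so $e_{j'}=\delta_{jj'}$ and your Step 1 formula gives exactly $z_j$. This is the choice made in the paper. With this correction your argument is sound and follows the paper's computation. The only difference is the ending: your unitriangular change of basis in $\Fari$ yields freeness directly, whereas the paper first shows that $\{[\bfy_i]\mid d\le i<d\l\}$ generates $\Uari$ and then deduces independence from $\dim_{\QQ}\UariQ=d\l-d$.
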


	\begin{proof}
		By the strong multiplication relation \eqref{eq-strong-multi-symbol} and the Gross-Koblitz relation \eqref{eq-gk-symbol}, we have
		$$
		\mathbf{0}
		= \sum_{i=0}^{\l-1} \left[-\ang{q^{di}y}\right]
		= \sum_{j=0}^{d-1} \left(\sum_{i=0}^{\l-1} y_{di+j} \right) \left( \sum_{i=0}^{\l-1} [\bfy_{di+j}] \right)
		\in \Uari
		$$
		for all $y \in (q^{d\l}-1)^{-1}\ZZ/\ZZ$ with $\ang{y} = \sum_{i=0}^{d\l-1} y_iq^i / (q^{d\l}-1)$ where $0\leq y_i < q$ for all $i$.
		In particular, taking $y = q^{j_0} / (q^{d\l}-1)$ with $0 \leq j_0 < d$ implies that
		$$
		\sum_{i=0}^{\l-1} [\bfy_{di+j_0}] = \mathbf{0}.
		$$
		Thus, $\Uari$ is generated by $\{ [\bfy_i] \mid d \leq i < d\l \}$ and the rank is bounded above by $d\l-d$.
		
		On the other hand, let $\EariQ$ be the subspace of $\FariQ = \QQ \otimes_\ZZ \Fari$ generated by the Gross-Koblitz relation \eqref{eq-gk-symbol}.
		Note that $\EariQ$ is generated by the linearly independent set
		$$
		\left\{ \sum_{i=0}^{\l-1} [\bfy_{di+j}] \Bigm| 0 \leq j < d \right\}.
		$$
		In particular, from the isomorphism
		$$
		\UariQ := \QQ \otimes_\ZZ \Uari \simeq \FariQ / \EariQ,
		$$
		this implies that $\dim_\QQ \UariQ = d\l - d$.
		Hence, $\Uari$ is a free abelian group of rank $d\l-d$ with basis $\{ [\bfy_i] \mid d \leq i < d\l \}$.
	\end{proof}
	
	By \cite[Theorem 4.2.1]{cwy2026vadic}, there exists an injective $\QQ$-linear homomorphism
	$$
	\widehat{\Pi}^{\textnormal{ari}}_v: \UariQ \lra  \ovl{k}_v^\times / \ovl{k}^\times,
	\quad
	\widehat{\Pi}^{\textnormal{ari}}_v([y]) := \vaf(-\ang{-y}),
	$$
	called the \textit{$v$-adic arithmetic gamma distribution}.
	Then by Proposition \ref{prop-rank-basis-of-u}, we obtain a transcendence base of the normalized $v$-adic gamma values.
	
	\begin{cor}
		We have
		$$
		\ovl{k} \left( \bvaf(y) \Bigm| y \in \frac{1}{q^{d\l}-1} \ZZ \right)
		= \ovl{k} \left( \bvaf(\bfy_i) \bigm| d \leq i < d\l\right)
		$$
		and the special gamma values
		$$
		\left\{\bvaf(\bfy_i) \bigm| d \leq i < d\l\right\}
		$$
		are algebraically independent.
	\end{cor}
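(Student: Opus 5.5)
The corollary follows the pattern of Corollary \ref{cor-trans-degree-of-inf}, using the $v$-adic gamma distribution $\widehat{\Pi}^{\textnormal{ari}}_v$ in place of the $\infty$-adic one. The first step is to express every normalized value as an algebraic multiple of a product of distribution values. By Definition \ref{defn-bvaf}, for $y \in (q^{d\l}-1)^{-1}\ZZ$ the quotient $\bvaf(y)/\vaf(y)$ is a rational power of $\lambda$ times a sign, hence lies in $\ovl{k}^\times$. By Proposition \ref{prop-trans}(2), $\bvaf(y)$ and $\bvaf(-\ang{-y})$ differ by a factor in $k^\times$, since $y$ and $-\ang{-y}$ differ by an integer. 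Therefore the image of $\bvaf(y)$ in $\ovl{k}_v^\times/\ovl{k}^\times$ equals $\widehat{\Pi}^{\textnormal{ari}}_v([y])$.

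For the field equality, I use the relations in $\Uari$. By Proposition \ref{prop-rank-basis-of-u}, every class $[y]$ is an integral combination $\sum_{d\le i<d\l} m_i[\bfy_i]$. Both defining relations of $\Rari$ hold on the nose for the function $\bvaf\ags$: the strong multiplication relation by Proposition \ref{prop-strong-multi}(2),(3), and the Gross-Koblitz relation by Proposition \ref{prop-gk}(2). Hence
\[
\bvaf(y)\ags(y) = c \prod_{d\le i<d\l} \bigl(\bvaf(\bfy_i)\ags(\bfy_i)\bigr)^{m_i}
\]
for some $c \in \ovl{k}^\times$, where $c$ absorbs the translation factors.

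To remove the Gauss sums, I need $\ags$ to take values in $\ovl{k}^\times$. Each $g_j$ is a finite sum of products of elements of $\Fqd$ and Carlitz $v$-torsion points, so it is algebraic over $k$, and its chosen roots are then algebraic as well. Consequently $\bvaf(y)$ is a $\ovl{k}^\times$-multiple of a monomial in $\{\bvaf(\bfy_i)\}_{d\le i<d\l}$, and the two fields coincide.

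For algebraic independence, I argue by contradiction. Suppose the $d\l-d$ values $\bvaf(\bfy_i)$ satisfied a nontrivial algebraic relation over $\ovl{k}$. Then the transcendence degree of the field they generate would be less than $d\l-d$. The natural route is to combine injectivity of $\widehat{\Pi}^{\textnormal{ari}}_v$ with Proposition \ref{prop-rank-basis-of-u}: the classes $[\bfy_i]$ are $\QQ$-linearly independent in $\UariQ$, so the values $\bvaf(\bfy_i)$ are multiplicatively independent modulo $\ovl{k}^\times$. This is the main obstacle, because multiplicative independence alone does not imply algebraic independence. The step must appeal to the theorem of \cite{cwy2026vadic}, which gives transcendence degree $\l'-\gcd(d,\l')$ for the field generated by $\vaf(y)$, $y\in(q^{\l'}-1)^{-1}\ZZ$. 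Taking $\l'=d\l$ gives $d\l-d$. Since the field equality shows that $d\l-d$ elements generate a field of that transcendence degree, they are algebraically independent.
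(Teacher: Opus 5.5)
Your proposal is correct and follows essentially the paper's route. The integral basis of $\Uari$ from Proposition \ref{prop-rank-basis-of-u}, together with the exact strong multiplication, Gross--Koblitz and translation relations and the algebraicity of $\ags$, gives the field equality; the transcendence degree $d\l-d$ from \cite{cwy2026vadic} then forces the $d\l-d$ generators $\bvaf(\bfy_i)$ to be algebraically independent. You also note that injectivity of $\widehat{\Pi}^{\textnormal{ari}}_v$ alone only gives multiplicative independence modulo $\ovl{k}^\times$, so that the second claim really rests on the transcendence degree count from \cite{cwy2026vadic}; this makes explicit a step the paper leaves implicit.
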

	
	\subsection{The algebra isomorphism}     \label{subsec-the-algebra-isomorphism}
	
	We now finish the proof of Theorem \ref{thm-main} by establishing the desired $\ovl{k}$-algebra isomorphism \eqref{eq-main-thm-dl}.
	Our strategy is to identify the algebras in question as the quotient of a polynomial ring over $\ovl{k}$ by the strong multiplication and Gross-Koblitz relations.
	
	\begin{proof}
		Consider the polynomial ring
		$$
		\ovl{k}\left[X_{[y]} \Bigm| y \in \frac{1}{q^{d\l}-1}\ZZ/\ZZ \right]
		$$
		where the independent variables $X_{[y]}$ are indexed by the basis of $\Aari$.
		Let $I_\l$ be the ideal generated by
		$$
		X_{[y]} - \prod_{i=0}^{d\l-1} X_{[\bfy_i]}^{y_i}
		$$
		for all $y \in (q^{d\l}-1)^{-1}\ZZ/\ZZ$ with $\ang{-y} = \sum_{i=0}^{d\l-1} y_iq^i/(q^{d\l}-1)$ where $0 \leq y_i < q$ for all $i$, and
		$$
		\prod_{i=0}^{\l-1} X_{[-\ang{q^{di}y}]} - 1
		\wideforall
		y \in \frac{1}{q^{d\l}-1}\ZZ/\ZZ.
		$$
		Then by the strong multiplication relation (Proposition \ref{prop-strong-multi}) and the Gross-Koblitz relation (Proposition \ref{prop-gk}) of $\biaf$, the evaluation map
		$$
		\ev_\infty: X_{[y]} \longmapsto \biaf(-\ang{-y})
		$$
		factors through
		$$
		\ev_\infty: \ovl{k}\left[X_{[y]} \Bigm| y \in \frac{1}{q^{d\l}-1}\ZZ/\ZZ \right] \bigg/ I_\l
		\ltra
		\ovl{k} \left[\biaf(y) \Bigm| y \in \frac{1}{q^{d\l}-1}\ZZ \cap (-1,0] \right].
		$$
		Also, by Corollary \ref{cor-trans-degree-of-inf}, it induces the $\ovl{k}$-algebra isomorphism
		$$
		\ovl{k} \left[ X_{[\bfy_i]} \bigm| d \leq i < d\l \right]
		\simeq \ovl{k} \left[\biaf(\bfy_i) \bigm| d \leq i < d\l\right].
		$$
		Hence, by Propositions \ref{prop-rank-basis-of-u} and \ref{prop-rank-basis-of-V}, we have the following commutative diagram:
		$$
		\begin{tikzcd}
			\ovl{k} \left[X_{[y]}^{\pm1} \Bigm| y \in \displaystyle \frac{1}{q^{d\l}-1}\ZZ/\ZZ \right] / I_\l   \arrow[r, "\ev_\infty", two heads]
			& \ovl{k} \left[\biaf(y)^{\pm1} \Bigm| y \in \displaystyle \frac{1}{q^{d\l}-1}\ZZ \cap (-1,0] \right]   \\
			\ovl{k} \left[ X_{[\bfy_i]}^{\pm1} \bigm| d \leq i < d\l \right] \arrow[r, "\sim"] \arrow[u, two heads]
			& \ovl{k} \left[\biaf(\bfy_i)^{\pm1} \bigm| d \leq i < d\l\right] \arrow[u, equal].
		\end{tikzcd}
		$$
		In particular, this implies that the upper map $\ev_\infty$ is in fact injective.
		Hence, we obtain the $\ovl{k}$-algebra isomorphism
		\begin{equation}     \label{eq-ev_infty}
			\ev_\infty: \ovl{k} \left[X_{[y]} \Bigm| y \in \frac{1}{q^{d\l}-1}\ZZ/\ZZ \right] \bigg/ I_\l  \isoto \ovl{k} \left[\biaf(y) \Bigm| y \in \frac{1}{q^{d\l}-1}\ZZ \cap (-1,0] \right].
		\end{equation}
		
		On the other hand, by the monomial relations of $\bvaf \ags$ in Propositions \ref{prop-strong-multi}, \ref{prop-gk}, and the results in \S\ref{subsubsec-the-v-adic-case}, the above argument also works for the evaluation map
		$$
		\ev_v: X_{[y]} \longmapsto \bvaf(-\ang{-y}) \ags(-\ang{-y}).
		$$
		In particular, we have the $\ovl{k}$-algebra isomorphism
		\begin{equation}      \label{eq-ev_v}
			\ev_v: \ovl{k} \left[X_{[y]} \Bigm| y \in \frac{1}{q^{d\l}-1}\ZZ/\ZZ \right] \bigg/ I_\l  \isoto \ovl{k} \left[\bvaf(y) \Bigm| y \in \frac{1}{q^{d\l}-1}\ZZ \cap (-1,0] \right].
		\end{equation}
		
		Combining \eqref{eq-ev_infty} and \eqref{eq-ev_v}, we have
		$$
		\ovl{k} \left[\biaf(y) \Bigm| y \in \frac{1}{q^{d\l}-1}\ZZ \cap (-1,0] \right]
		\isoto
		\ovl{k} \left[\bvaf(y) \Bigm| y \in \frac{1}{q^{d\l}-1}\ZZ \cap (-1,0] \right]
		$$
		sending $\biaf(y)$ to $\bvaf(y) \ags(y)$.
		Finally, by the translation relations (Proposition \ref{prop-trans}), this extends to the desired $\ovl{k}$-algebra isomorphism
		$$
		\ovl{k} \left[\biaf(y) \Bigm| y \in \frac{1}{q^{d\l}-1}\ZZ \right]
		\isoto
		\ovl{k} \left[\bvaf(y) \Bigm| y \in \frac{1}{q^{d\l}-1}\ZZ \right]
		$$
		sending $\biaf(y)$ to $\bvaf(y) \ags(y)$.
		This completes the proof.
	\end{proof}
	
	\subsection*{Acknowledgments}
	
	The author is deeply grateful to Fu-Tsun Wei for his invaluable guidance and constant encouragement during the preparation of this work.
	He also thanks the National Science and Technology Council for generous financial support over the past few years.
	
	\printbibliography

\end{document}